\theoremstyle{plain}
 \newtheorem{theorem}{Theorem}
 \newtheorem{cor}[theorem]{Corollary}
 \newtheorem{lemma} [theorem] {Lemma}
 \newtheorem{remark}[theorem]{Remark}
\begin{document}

 \title{A generalized spectral radius formula and Olsen's question}
\author{Terry Loring and Tatiana Shulman}

\address{Department of Mathematics and Statistics, University of New Mexico,
Albuquerque, NM 87131, USA.}

\address{Department of Mathematical Sciences, University of Copenhagen, Universitetsparken 5, DK-2100 Copenhagen, Denmark}


\subjclass[2000]{}

\keywords {}

\date{\today}

\maketitle

\begin{abstract} Let $A$ be a $C^*$-algebra and $I$ be a closed ideal in $A$.  For $x\in A$, its image under the canonical surjection $A\to A/I$ is denoted by $\dot x$, and the spectral radius of $x$ is denoted by $r(x)$. We prove that $$\max\{r(x), \|\dot x\|\} = \inf \|(1+i)^{-1}x(1+i)\|$$ (where  infimum is taken over all $i\in I$ such that $1+i$ is invertible), which generalizes
spectral radius formula of Murphy and West \cite{MurphyWest} (Rota for $\mathcal{B(H)}$ \cite{Rota}). Moreover if  $r(x)< \|\dot x\|$ then the infimum is attained. A similar result is proved for commuting family of elements
 of a $C^*$-algebra.

Using this we give a partial answer to an open question of C. Olsen: if $p$ is a polynomial then for "almost every'' operator $T\in B(H)$ there is a compact perturbation $T+K$ of $T$ such that $$\|p(T+K)\| = \|p(T)\|_e.$$

We show  also that if operators $A,B$ commute, $A$ is similar to a contraction and $B$ is similar to a strict contraction then they are simultaneously similar to contractions.
\end{abstract}

\section{Introduction}

Let $A$ be a $C^*$-algebra, $x$ its element. The spectral radius of $x$ will be denoted by $r(x)$.
Spectral radius formula of Murphy and West \cite{MurphyWest} (Rota for $\mathcal{B(H)}$ \cite{Rota}) is $$r(x) = \inf \|s^{-1}xs\|,$$ where infimum is taken over all invertible elements in the $C^*$-algebra.  In present paper we get the following generalization of this formula.
Let $I$ be a closed ideal in $A$.  For $x\in A$, by $\dot x$ we denote the image of $x$ under the canonical surjection $A\to A/I$. We prove (Corollary~ \ref{formula}) that $$\max\{r(x), \|\dot x\|\} = \inf \|(1+i)^{-1}x(1+i)\|,$$ where  infimum is taken over all $i\in I$ such that $1+i$ is invertible. In the case $I=A$ it is the spectral radius formula. Moreover we prove that if $\|\dot x\| > r(x)$ then the infimum is attained. For a finite family of commuting elements, the corresponding $i$ can be chosen common (Theorem \ref{formulaComm}).

We apply these results to a question posed by C. Olsen in \cite{OlsenPerturb}. Her question is the following:

\medskip

{\it given an operator $T$ and a polynomial $p$ does there exist a compact perturbation $T+K$ of $T$ such that $\|p(T+K)\| = \|p(T)\|_e$?}

\medskip
\noindent(here $\| \; \|_e$ is the essential norm of an operator). Olsen's second question was if this compact operator $K$ can be chosen simultaneously for all polynomials.

The both questions are open, but there are partial results (\cite{Akemann}, \cite{OlsenStructureTheorem}, \cite{OlsenPlast}, \cite{OlsenPerturb},
 \cite{SmithAndK}, \cite{SmithWard}, \cite{semiproj}), obtained either for special classes of operators or for special
 polynomials. Below we list most of these results:

 \medskip

 1) If $p$ is arbitrary and $T$ is essentially normal or subnormal or n-normal operator or a nilpotent weighted shift, then there is a positive answer to Olsen's question. Moreover $K$ can be chosen independently of polynomial \cite{OlsenPlast}, \cite{OlsenPerturb};

2) If $p(T)$ is compact, then  the answer is positive \cite{OlsenStructureTheorem};

3) If $T$ is arbitrary,  $p$ is linear, the answer is positive. Moreover $K$ can be chosen common for all linear polynomials \cite{SmithAndK};

4) If $p$ is a monomial $p(x) = x^n$, $T$ arbitrary - the answer is positive. Moreover $K$ can be chosen common for finitely many monomials. If $T$ is not quasinilpotent, then $K$ can be chosen common for all monomials  \cite{Akemann}.

\medskip

In present paper  for finitely many arbitrary polynomials $p_1, \ldots, p_n$ we show (Theorems \ref{+} and \ref{big set}) that there is a dense open subset $\Sigma_{p_1, \ldots, p_n}\subset \mathcal{B(H)}$ such that for any operator $T\in \Sigma_{p_1, \ldots, p_n}$, there is $K\in \mathcal{K(H)}$ such that
$$\|p_i(T+K)\| = \|p_i(T)\|_e,$$ $i=1, \ldots, n$.

In other words, those operators for which we cannot answer Olsen's question, form a nowhere dense set.

In \cite{OlsenPlast}, Theorem 5.2, for either $T$ or $T^*$ quasitriangular there was solved an "approximate version'' of the problem: $$\inf_{K\in \mathcal{K(H)}} \|p(T+K)\| = \|p(T)\|_e.$$ We prove that this holds for arbitrary operator $T$ (Theorem \ref{approximate}).

Finally, in Section 3, we present a result which is not formally  related with generalized spectral radius formula, but uses arguments similar to ones used in the proof of the formula. It is an open question whether two commuting operators which are each similar to a contraction are simultaneously similar to contractions (\cite{Davidson}, page 159).
In \cite{FongSourour} a confirmative answer is obtained for the case when the both operators are similar to strict contractions. We show (Theorem \ref{CommonSimilarity}) that if only one of these operators is similar to a strict contraction, then the answer is positive.

The authors are grateful to Victor Shulman for many helpful discussions.

 \section{Generalized spectral radius formula}

  \begin{theorem}\label{formulaComm} Let $A$ be a $C^*$-algebra, $I$ its ideal, and $a_1, \ldots, a_n \in A$ commute. Then for any $\epsilon > 0$, there is $e\in I$ such that $$\|(1+e)a_j(1+e)^{-1}\|- \epsilon \;\le \; \max\{r(a_j), \|\dot a_j\|\}\;\le\;  \|(1+e)a_j(1+e)^{-1}\|, $$ for all  $j$. If $ r(a_j)< \|\dot a_j\|$, for all $j$, then there is $e\in I$ such that $$\max\{r(a_j), \|\dot a_j\|\} =   \|(1+e)a_j(1+e)^{-1}\|,$$ for all  $j$.
  \end{theorem}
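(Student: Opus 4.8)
The plan is to prove the two displayed conclusions by reducing both to a single statement about commuting elements of norm-radius $<1$ whose quotient images are contractions, and then to exhibit an explicit positive invertible $P$ with $\dot P=1$ and $a_j^*Pa_j\le P$ for all $j$; the conjugator is then $1+e=P^{1/2}$.

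\emph{Reductions.} We may assume $A$ is unital (adjoining a unit leaves $I$ an ideal and changes neither $r(a_j)$ nor $\|\dot a_j\|$). For the first assertion, given $\epsilon>0$ put $t_j=\max\{r(a_j),\|\dot a_j\|\}$ and replace $a_j$ by $a_j/(t_j+\epsilon/2)$ (reading $t_j=0$ as $\epsilon/2$): the rescaled elements still commute and now satisfy $r(a_j)<1$ and $\|\dot a_j\|\le 1$, and it suffices to produce $e\in I$ with $1+e$ invertible and $\|(1+e)a_j(1+e)^{-1}\|\le 1$ for all $j$. For the second assertion, $r(a_j)<\|\dot a_j\|$ forces $\|\dot a_j\|>0$, so replacing $a_j$ by $a_j/\|\dot a_j\|$ we again reach $r(a_j)<1$, $\|\dot a_j\|\le 1$ and the same reduced problem. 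The inequality $\|(1+e)a_j(1+e)^{-1}\|\ge\max\{r(a_j),\|\dot a_j\|\}$, needed for both assertions, is immediate: a similarity does not change the spectrum, so the norm is $\ge r(a_j)$, and the quotient map sends $1+e$ to $1$, so the norm is $\ge\|\dot a_j\|$.

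\emph{The reduced statement.} Suppose $a_1,\dots,a_n\in A$ commute, $r(a_j)<1$ and $\|\dot a_j\|\le 1$ for all $j$. I would take
\[ P \;=\; 1+\sum_{\alpha\in\mathbb{N}^n}(a^{\alpha})^*\,w\,a^{\alpha}, \qquad a^{\alpha}:=a_1^{\alpha_1}\cdots a_n^{\alpha_n}, \qquad w:=\sum_{j=1}^{n}(a_j^*a_j-1)_+ , \]
where $(\cdot)_+$ is the positive part. Three facts make this work. (i) \emph{Convergence}: $\|a^{\alpha}\|\le\prod_j\|a_j^{\alpha_j}\|$ and $r(a_j)<1$ give $\sum_\alpha\|a^{\alpha}\|^2\le\prod_j\sum_m\|a_j^m\|^2<\infty$, so the series converges absolutely and, since $w\in I$ and $I$ is closed, $P-1\in I$. (ii) \emph{$\dot P=1$}: the quotient $*$-homomorphism sends $(a_j^*a_j-1)_+$ to $(\dot a_j^*\dot a_j-1)_+=0$, because $\dot a_j^*\dot a_j\le\|\dot a_j\|^2\le 1$; hence $\dot w=0$, $\dot P=1$. (iii) \emph{$a_j^*Pa_j\le P$}: commutativity gives $a_j^*\big(\sum_\alpha(a^{\alpha})^*wa^{\alpha}\big)a_j=\sum_{\beta:\,\beta_j\ge 1}(a^{\beta})^*wa^{\beta}$, so
\[ P-a_j^*Pa_j \;=\; 1-a_j^*a_j+\sum_{\beta:\,\beta_j=0}(a^{\beta})^*wa^{\beta} \;\ge\; 1-a_j^*a_j+w \;\ge\; 1-a_j^*a_j+(a_j^*a_j-1)_+\;\ge\;0, \]
using that the last sum contains the term $w$ and is otherwise positive. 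Also $P\ge 1$, so $P$ is positive invertible and $a_j^*Pa_j\ge0$. Setting $1+e:=P^{1/2}$ we get $e\in I$ (as $\dot P^{1/2}=1$), $1+e$ invertible, and $\|(1+e)a_j(1+e)^{-1}\|^2=\|P^{-1/2}a_j^*Pa_jP^{-1/2}\|\le\|P^{-1/2}PP^{-1/2}\|=1$.

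\emph{Where the work is.} The delicate point is exactly step (ii) together with the shape of $P$. The bare Rota element $\sum_\alpha(a^{\alpha})^*a^{\alpha}$ is positive and satisfies $a_j^*(\cdot)a_j\le(\cdot)$, but its quotient image is $\sum_\alpha(\dot a^{\alpha})^*\dot a^{\alpha}\neq 1$, and the natural fixes—inserting a fixed weight $h$ in the middle of the sum, or subtracting a positive lift of the excess—either force $h$ to be the non-positive alternating expression $\prod_j(\operatorname{id}-\operatorname{Ad}_{b_j})(1)$ (destroying positivity; this genuinely fails already for $n=2$) or destroy the inequality. The resolution is that a \emph{single} weight $w$ placed in the middle of the sum, \emph{plus} the summand $1$, makes $\dot P=1$ automatic the instant $w\in I$; and the specific choice $w=\sum_j(a_j^*a_j-1)_+$ is pushed into $I$ precisely by the hypothesis $\|\dot a_j\|\le 1$, while still dominating every $a_j^*a_j-1$, which is all that (iii) requires. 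Undoing the rescalings finishes both claims: in the first case $\|(1+e)a_j(1+e)^{-1}\|\le t_j+\epsilon/2\le\max\{r(a_j),\|\dot a_j\|\}+\epsilon$, and in the second $\|(1+e)a_j(1+e)^{-1}\|\le\|\dot a_j\|=\max\{r(a_j),\|\dot a_j\|\}$, which with the lower bound gives equality.
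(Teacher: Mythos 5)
Your proof is correct, and it follows the same overall skeleton as the paper's (rescale to the normalized situation $r(a_j)<1$, $\|\dot a_j\|\le 1$; build a weighted Rota-type series; conjugate by its square root $1+e$; undo the scaling, taking $\delta_j=\|\dot a_j\|$ in the second case), but it genuinely differs at the one point where the paper's argument is not self-contained. In Lemma~\ref{lemma} the paper first invokes the lifting result of \cite{TT} to obtain a single $0\le i_0\le 1$ in $I$ with $\|(1-i_0)a_j\|\le 1$ for all $j$ simultaneously, and then uses the weight $i=1-(1-i_0)^2\in I$, for which $a_j^*(1-i)a_j\le 1$, inside the series $z=1+\sum_{|\alpha|\ge 1}(a^{\alpha})^*i\,a^{\alpha}$. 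You instead manufacture the weight explicitly: $w=\sum_j(a_j^*a_j-1)_+$ lies in $I$ by functional calculus (since $\dot a_j^*\dot a_j\le 1$) and satisfies $a_j^*a_j\le 1+w$ for every $j$, and by including the $\alpha=0$ term in $P=1+\sum_{\alpha}(a^{\alpha})^*w\,a^{\alpha}$ you get $P-a_j^*Pa_j\ge 1-a_j^*a_j+w\ge 0$, which is all the square-root trick needs. (Do state explicitly that $0\in\mathbb{N}^n$ in your index set, since your step (iii) uses the $\beta=0$ term.) The net effect is that your additive ``excess'' weight replaces the multiplicative simultaneous damping from \cite{TT}, making the key lemma elementary and self-contained; the paper's route, on the other hand, rests on a stronger statement of independent interest (a common $i_0$ contracting all the $a_j$ at once), which is the kind of tool that transfers to other simultaneous lifting problems, whereas your $w$ is tailor-made for exactly this inequality. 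The scaling reductions, the lower bound $\|(1+e)a_j(1+e)^{-1}\|\ge\max\{r(a_j),\|\dot a_j\|\}$, and the treatment of the case $r(a_j)<\|\dot a_j\|$ are the same in both arguments.
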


  \medskip

  We will need a lemma.

\begin{lemma}\label{lemma} Let $A$ be a  $C^*$-algebra, $I$ its ideal, and $a_1,\ldots, a_n\in A$ commute.  Suppose  $r(a_j)<1$,  $\|\dot a_j \| \le 1$, for all $i$.
Then there exists $e\in I$ such that $$\|(1+e)a_j(1+e)^{-1}\| \le 1,$$ for all $j$.
\end{lemma}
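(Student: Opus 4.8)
The plan is to reduce the lemma to producing a single element $b$ in the (unitized) algebra that is positive and invertible, satisfies $b-1\in I$, and obeys the Lyapunov‑type inequality $a_j^*ba_j\le b$ for \emph{all} $j$ at once; the conjugator is then $b^{1/2}$, i.e.\ $e:=b^{1/2}-1$. Indeed, since $t\mapsto\sqrt{1+t}-1$ vanishes at $0$, continuous functional calculus applied to $b-1\in I$ gives $e=b^{1/2}-1\in I$; positivity and invertibility of $b$ make $1+e=b^{1/2}$ invertible; and with $s=b^{1/2}$ one has
\[
\bigl((1+e)a_j(1+e)^{-1}\bigr)^*\bigl((1+e)a_j(1+e)^{-1}\bigr)=s^{-1}a_j^*ba_js^{-1}\le s^{-1}bs^{-1}=1 ,
\]
so $\|(1+e)a_j(1+e)^{-1}\|\le 1$. (If $A$ is nonunital we pass to $\widetilde A$, in which $I$ is still an ideal.)

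To build $b$, set $d_j:=a_j^*a_j-1$. The hypothesis $\|\dot a_j\|\le 1$ says exactly that the image of $d_j$ in $A/I$ is $\le 0$; since the quotient $*$-homomorphism commutes with taking positive parts, $d_j^{+}$ is sent to $0$, i.e.\ $d_j^{+}\in I$. Put $g:=\sum_{j=1}^{n}d_j^{+}\in I$, so $g\ge 0$ and $g\ge d_j^{+}\ge d_j$ for every $j$. Now I imitate the classical series behind Rota's formula, but with $g$ in place of the unit: since $r(a_j)<1$ and the $a_j$ commute, $\|a^{\alpha}\|\le\prod_j\|a_j^{\alpha_j}\|$ decays geometrically in $|\alpha|$, so
\[
k:=-\sum_{\alpha\in\mathbb{N}^{n}}(a^{\alpha})^*g\,a^{\alpha}
\]
converges absolutely; every summand lies in the closed ideal $I$, hence $k\in I$, and $k=k^*\le 0$. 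Using $a^{\alpha}a_j=a^{\alpha+e_j}$ (this is where commutativity is used) the sum telescopes to $a_j^*ka_j-k=\sum_{\beta:\beta_j=0}(a^{\beta})^*g\,a^{\beta}\ge g\ge d_j$. Taking $b:=1-k$ we get $b-1=-k\in I$, $b\ge 1$ positive and invertible, and
\[
b-a_j^*ba_j=(a_j^*ka_j-k)-(a_j^*a_j-1)=(a_j^*ka_j-k)-d_j\ge 0 ,
\]
which is exactly the inequality needed. So the single $e=b^{1/2}-1\in I$ works for all $j$ simultaneously.

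The convergence of the multi‑index series and the final $C^*$‑norm estimate are routine. The step I expect to be the real point is the identification $d_j^{+}\in I$: it is the only place the hypothesis $\|\dot a_j\|\le 1$ enters, and it is precisely what forces $g$ — and hence $k$ and $e$ — into $I$ rather than merely into $A$. The other ingredients genuinely needed are the geometric decay of $\|a^{\alpha}\|$ coming from $r(a_j)<1$ and the commutativity of the family, both of which feed into the telescoping bound $a_j^*ka_j-k\ge g$. Without the ideal constraint this is just the usual argument that $r(a)=\inf\|s^{-1}as\|$; the new content is keeping the conjugator of the form $1+i$ with $i\in I$.
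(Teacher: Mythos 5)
Your argument is correct, and its skeleton is the same as the paper's: build a positive invertible element of the form $1+(\text{element of }I)$ as a convergent multi-indexed series $\sum_\alpha (a^\alpha)^*(\text{positive element of }I)\,a^\alpha$ (convergence by the root test from $r(a_j)<1$), use commutativity to telescope and get the Lyapunov inequality $a_j^*b\,a_j\le b$ for all $j$, and conjugate by $b^{1/2}$. The genuine difference is the ``seed'' element of $I$ that gets summed. The paper invokes its companion result \cite{TT} to produce a single $i_0\in I$, $0\le i_0\le 1$, with $\|(1-i_0)a_j\|\le 1$ for all $j$, and then feeds $i=1-(1-i_0)^2$ into the series, so that the base inequality is $a_j^*(1-i)a_j\le 1$. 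You instead observe that $\|\dot a_j\|\le 1$ forces the positive part $d_j^+=(a_j^*a_j-1)^+$ into $I$ (the quotient map commutes with functional calculus), take $g=\sum_j d_j^+\in I$ with $g\ge d_j$, and include the $\alpha=0$ term of the series so that the extra $g$ absorbs the defect $a_j^*a_j-1\le g$. Your route is self-contained and more elementary: it removes the only nontrivial external input of the paper's proof and uses the hypothesis $\|\dot a_j\|\le 1$ in the most direct possible way. What the paper's version buys is a normalized seed $0\le i\le 1$ obtained from a lifting-type theorem of independent interest (and consistent with the authors' toolkit); nothing in the present lemma seems to require that extra strength. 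Minor quibbles only: the summand bound should be written with squares, $\|(a^\alpha)^*g\,a^\alpha\|\le\|g\|\prod_j\|a_j^{\alpha_j}\|^2$, and the phrase ``decays geometrically in $|\alpha|$'' should really be ``is summable over $\alpha$'' (each factor decays geometrically eventually), which is exactly what the root test gives and what the paper uses.
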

\begin{proof} By \cite{TT} there exists $0\le i_0\le 1$ in $I$ such that $\|(1-i_0)a_j\| \le 1$, $j=1, \ldots, n$.
Let $i = 1 - (1-i_0)^2$. Then $${a_j}^*(1-i)a_j \le 1,$$ $j=1, \ldots, n$.

Since $r(a_j)<1$, by Cauchy's root test \begin{equation}\label{series}\sum_{k=1}^{\infty} \|{a_j}^k\|^2 < \infty.\end{equation}

Consider series $$\sum_{k_1,\ldots,k_n} (a_1^*)^{k_1}\ldots(a_n^*)^{k_n}i\;{a_n}^{k_n}\ldots {a_1}^{k_1}.$$ For its partial sums we have $$\|\sum_{k_1,\ldots,k_n =1}^{N} (a_1^*)^{k_1}\ldots(a_n^*)^{k_n}i\;{a_n}^{k_n}\ldots {a_1}^{k_1}\| \le \sum_{k_1,\ldots,k_n =1}^{N} \|{a_1}^{k_1}\|^2\ldots \|{a_n}^{k_n}\|^2  $$ and by (\ref{series}) the series converges.

 Let $$z= 1 + \sum_{k_1+\ldots +k_n\ge 1} (a_1^*)^{k_1}\ldots(a_n^*)^{k_n}i\;{a_n}^{k_n}\ldots {a_1}^{k_1}.$$

  Then $z\ge 1$ and hence invertible. Let $y = z^{1/2}$. Then $y$ is invertible and of the form $1+e$, $e\in I$. For any $1\le j\le n$ we have
 \begin{multline*} {a_j}^*y^2 a_j = {a_j}^*(1-i)a_j +  \sum_{
  k_1+\ldots +k_n\ge 1} (a_1^*)^{k_1}\ldots({a_j}^*)^{k_j+1}\ldots (a_n^*)^{k_n}i\;{a_n}^{k_n}\ldots {a_j}^{k_j+1}\ldots {a_1}^{k_1} \\ \le 1 + \sum_{k_1+\ldots +k_n\ge 1} (a_1^*)^{k_1}\ldots(a_n^*)^{k_n}i\;{a_n}^{k_n}\ldots {a_1}^{k_1}  = y^2 \end{multline*} and
  $$\|ya_jy^{-1}\|^2 = \|y^{-1}{a_j}^*y^2a_jy^{-1}\| \le 1.$$
\end{proof}

  {\it Proof of Theorem \ref{formulaComm}}.
  For each $j=1, \ldots, n$ take any $\delta_j$ such that $$ \max\{r(a_j), \|\dot a_j\|\} < \delta_j \le \max\{r(a_j), \|\dot a_j\|\} + \epsilon.$$ Applying Lemma \ref{lemma} to the elements $\frac{a_j}{\delta_j}$, we find
  $e\in I$ such that
  $$\|(1+e)\frac{a_j}{\delta_j}(1+e)^{-1}\|\le 1,$$ $j=1, \ldots, n$.   Hence  $$\|(1+e)a_j(1+e)^{-1}\|\le \delta_j \le \epsilon + \max\{r(a_j), \|\dot a_j\|\}$$ $j=1, \ldots, n$. The inequality $$\|(1+e)a_j(1+e)^{-1}\|\ge  \max\{r(a_j), \|\dot a_j\|\}$$ is clear.

   If $\| \dot a_j\| > r(a_j)$, we may take $\delta_j = \| \dot a_j\|$ and obtain the second statement.
        \qed

        \medskip
 In particular, for a single element we get the following formula.

\begin{cor}\label{formula} Let $A$ be a $C^*$-algebra, $I$ its ideal, $a\in A$. Then $$\max\{r(a), \|\dot a\|\} = \inf \|(1+e)a(1+e)^{-1}\|$$ (here infimum in the right-hand side is taken over all $e\in I$ such that $1+e$ is invertible). If $\|\dot a\| > r(a)$ then the infimum is attained.
  \end{cor}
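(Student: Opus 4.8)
The plan is to derive Corollary~\ref{formula} directly from Theorem~\ref{formulaComm} by specializing to the case $n=1$. The family $\{a\}$ consisting of a single element trivially commutes, so Theorem~\ref{formulaComm} applies verbatim.

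First I would handle the inequality $\max\{r(a),\|\dot a\|\}\le\inf\|(1+e)a(1+e)^{-1}\|$, where the infimum ranges over $e\in I$ with $1+e$ invertible. This is the ``clear'' direction already invoked in the proof of Theorem~\ref{formulaComm}: for any such $e$, conjugation $x\mapsto (1+e)x(1+e)^{-1}$ is an algebra automorphism of $A$, hence preserves the spectrum and so $r((1+e)a(1+e)^{-1})=r(a)$, giving $\|(1+e)a(1+e)^{-1}\|\ge r(a)$; and since $1+e\equiv 1$ in $A/I$, the image of $(1+e)a(1+e)^{-1}$ in $A/I$ is $\dot a$, so $\|(1+e)a(1+e)^{-1}\|\ge\|\dot a\|$. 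Taking the infimum over all admissible $e$ preserves both inequalities.

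Next, for the reverse inequality I would fix $\epsilon>0$ and apply Theorem~\ref{formulaComm} with $n=1$ to obtain $e\in I$ with $\|(1+e)a(1+e)^{-1}\|\le\max\{r(a),\|\dot a\|\}+\epsilon$. Since $1+e$ is invertible (being $z^{1/2}$ for an invertible $z\ge 1$ in the notation of the lemma, or more simply because the theorem produces it as such), this $e$ is admissible in the infimum, so $\inf\|(1+e)a(1+e)^{-1}\|\le\max\{r(a),\|\dot a\|\}+\epsilon$. Letting $\epsilon\to 0$ yields $\inf\|(1+e)a(1+e)^{-1}\|\le\max\{r(a),\|\dot a\|\}$, and combined with the previous paragraph this gives equality. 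Finally, in the case $\|\dot a\|>r(a)$, the second statement of Theorem~\ref{formulaComm} (with $n=1$) directly furnishes a single $e\in I$ with $\|(1+e)a(1+e)^{-1}\|=\max\{r(a),\|\dot a\|\}$, so the infimum is attained.

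I do not expect any genuine obstacle here, since all the work has been done in Theorem~\ref{formulaComm} and its supporting Lemma~\ref{lemma}; the only point requiring a line of justification is the easy lower bound, via the automorphism property of conjugation. The proof is essentially a one-paragraph specialization.
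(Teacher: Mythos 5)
Your proof is correct and follows exactly the paper's route: Corollary~\ref{formula} is obtained in the paper simply as the $n=1$ specialization of Theorem~\ref{formulaComm}, with the same easy lower bound and the same use of the theorem's second statement for attainment when $\|\dot a\|>r(a)$.
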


\begin{remark} In the case $r(a)\ge \|\dot a\|$, infimum in the formula need not be attained. First of all, if
$r(a)= \|\dot a\| = 0$, it obviously is not attained.

\noindent If $r(a) \ge \|\dot a\| \neq 0$ it also might not be attained.
For example, consider an operator of the form $$T = \left(\begin{array}{cc}1& K \\ 0& 1 \end{array}\right),$$ where K is compact.
Then $r(T) = \|T\|_e = 1$. If infimum in the formula was attained, $T$ would be similar to a contraction and hence
power-bounded. But   $$\|T^n\| = \|\left(\begin{array}{cc}1& nK\\ 0&1 \end{array}\right)\| \to \infty.$$
\end{remark}

\section{Olsen's question}

Let $\mathcal{B(H)}$ be the space of all bounded operators on a Hilbert space $\mathcal H$, $\mathcal{K(H)}$ the ideal of all compact operators, $Q(\mathcal H) = \mathcal{B(H)}/\mathcal{K(H)}$ Calkin algebra,
 $q: \mathcal{B(H)} \to Q(\mathcal H)$ the canonical surjection.

For $T\in \mathcal{B(H)}$, $\|\dot T\|$ and $r(\dot T)$ are called the essential norm and the essential spectral radius of $T$.  We will use also their standard  denotations $\|T\|_e$ and  $r_e(T)$.

Let $p_1, \ldots, p_n$ be polynomials.
Define  $$G_{p_1, \ldots,  p_n} = \{a\in Q(\mathcal H)\; |\; r(p_i(a))< \|p_i(a)\|, i=1, \ldots, n\}$$ and

$$\Sigma_{p_1, \ldots,  p_n} = q^{-1}(G_{p_1, \ldots, p_n}).$$

\begin{lemma}\label{11}\label{-1} For any polynomials $p_1, \ldots, p_n,$
\[
\Sigma_{p_1, \ldots, p_n} =
\left\{
	T \in \mathcal {B}(\mathcal {H})
	\left|
		\strut\;
		\exists K \in \mathcal {K}(\mathcal{H})
		\mathrm{\ such\ that\ }
		r(p_i(T+K))< \|p_i(T)\|_{e}, \;i=1, \ldots, n
	\right.
\right\} .
\]
\end{lemma}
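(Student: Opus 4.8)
The plan is to prove the set equality by establishing both inclusions, using Corollary~\ref{formula} (applied to the Calkin algebra) as the engine for the harder direction. Throughout, write $I = \mathcal{K}(\mathcal{H})$, $A = \mathcal{B}(\mathcal{H})$, and note that for $T \in \mathcal{B}(\mathcal{H})$ and a polynomial $p$ we have $\dot{p(T)} = p(\dot T)$ in the Calkin algebra, so that $\|p(T)\|_e = \|p(\dot T)\|$ and $r_e(p(T)) = r(p(\dot T))$. The key observation connecting inner automorphisms $a \mapsto (1+e)a(1+e)^{-1}$ of $Q(\mathcal{H})$ back to compact perturbations is that such an automorphism is implemented by lifting: if $1+e$ with $e \in \mathcal{K}(\mathcal{H})$ is invertible in $\mathcal{B}(\mathcal{H})$, then conjugating $T$ by $1+e$ gives an operator $(1+e)^{-1}T(1+e)$ which, modulo $\mathcal{K}(\mathcal{H})$, equals $(1+\dot e)^{-1}\dot T(1+\dot e)$, and crucially $(1+e)^{-1}T(1+e) - T \in \mathcal{K}(\mathcal{H})$ since $(1+e)^{-1}T(1+e) - T = (1+e)^{-1}[T,e] \in \mathcal{K}(\mathcal{H})$. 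Hence $(1+e)^{-1}T(1+e) = T + K$ for some $K \in \mathcal{K}(\mathcal{H})$.

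For the inclusion $\subseteq$: suppose $T \in \Sigma_{p_1,\ldots,p_n}$, i.e.\ $\dot T = q(T) \in G_{p_1,\ldots,p_n}$, meaning $r(p_i(\dot T)) < \|p_i(\dot T)\|$ for each $i$. I would like to apply Theorem~\ref{formulaComm} in the Calkin algebra to the commuting family $p_1(\dot T), \ldots, p_n(\dot T)$ with ideal $\{0\}$ — but with the zero ideal the theorem is just the ordinary spectral radius formula in $Q(\mathcal{H})$ and does not give a compact perturbation. Instead, the correct move is to apply Theorem~\ref{formulaComm} to the commuting family $T, T^2, \ldots, T^d$ (where $d = \max \deg p_i$) \emph{in $\mathcal{B}(\mathcal{H})$ with ideal $\mathcal{K}(\mathcal{H})$}: wait — rather, the cleanest route is to apply Corollary~\ref{formula} directly to control each $p_i$. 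Here is the argument: since each $p_i(\dot T)$ satisfies $r(p_i(\dot T)) < \|p_i(\dot T)\|$, pick $\delta_i$ with $r(p_i(\dot T)) < \delta_i < \|p_i(\dot T)\|$; apply Theorem~\ref{formulaComm} to the commuting elements $T/\delta_i^{1/d_i}$ — no. Let me restate cleanly: apply Theorem~\ref{formulaComm} with $A = \mathcal{B}(\mathcal{H})$, $I = \mathcal{K}(\mathcal{H})$, to the single element $T$ (the family reduces to $n=1$), or better, to a finite commuting family engineered so that conjugating reduces each $\|p_i\|$. Concretely, since $r_e(p_i(T)) < \|p_i(T)\|_e$, Theorem~\ref{formulaComm} applied to $\{p_1(T),\ldots,p_n(T)\}$ (these commute) with $\epsilon$ irrelevant gives $e \in \mathcal{K}(\mathcal{H})$ with $\|(1+e)p_i(T)(1+e)^{-1}\| = \max\{r_e(p_i(T)), \|p_i(T)\|_e\} = \|p_i(T)\|_e$. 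But $(1+e)p_i(T)(1+e)^{-1} = p_i((1+e)T(1+e)^{-1}) = p_i(T+K)$ with $K = (1+e)T(1+e)^{-1} - T \in \mathcal{K}(\mathcal{H})$ by the commutator observation above. Then $r(p_i(T+K)) \le \|p_i(T+K)\| = \|p_i(T)\|_e$; to get the \emph{strict} inequality $r(p_i(T+K)) < \|p_i(T)\|_e$ I note $r(p_i(T+K)) = r_e(p_i(T+K)) \vee (\text{something}) $ — actually $r(p_i(T+K)) \le \|p_i(T+K)\|$ need not be strict, so here one must instead start from a strictly smaller $\delta_i < \|p_i(T)\|_e$ and use the $\epsilon$-version: apply Theorem~\ref{formulaComm} to $\{p_i(T)/\delta_i\}$ with small $\epsilon$, obtaining $\|(1+e)p_i(T)(1+e)^{-1}\| < \delta_i < \|p_i(T)\|_e$, which gives $r(p_i(T+K)) \le \|p_i(T+K)\| < \|p_i(T)\|_e$ as required.

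For the reverse inclusion $\supseteq$: suppose there is $K \in \mathcal{K}(\mathcal{H})$ with $r(p_i(T+K)) < \|p_i(T)\|_e$ for all $i$. Since $K$ is compact, $\dot{(T+K)} = \dot T$, hence $\|p_i(T)\|_e = \|p_i(T+K)\|_e = \|p_i(\dot T)\|$ and likewise $r_e(p_i(T+K)) = r(p_i(\dot T))$. Now use the general fact $r_e(S) \le r(S)$ for any $S \in \mathcal{B}(\mathcal{H})$ (the essential spectrum is contained in the spectrum). Applying this with $S = p_i(T+K)$ gives $r(p_i(\dot T)) = r_e(p_i(T+K)) \le r(p_i(T+K)) < \|p_i(T)\|_e = \|p_i(\dot T)\|$, so $\dot T \in G_{p_1,\ldots,p_n}$, i.e.\ $T \in \Sigma_{p_1,\ldots,p_n}$. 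This direction is routine.

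The main obstacle is bookkeeping in the forward direction: one must be careful that applying the generalized spectral radius formula to the family $\{p_i(T)\}$ produces \emph{one} compact $e$ simultaneously controlling all $p_i$ (this is exactly why Theorem~\ref{formulaComm}, not merely Corollary~\ref{formula}, is invoked — the $p_i(T)$ commute since $T$ commutes with itself), and that the resulting conjugation genuinely differs from $T$ by a \emph{compact} operator, which hinges on $[T,e]$ being compact; and finally that one lands at a \emph{strict} inequality, which forces using the $\epsilon$-statement of Theorem~\ref{formulaComm} with a buffer $\delta_i$ strictly below $\|p_i(T)\|_e$ rather than the attained-infimum statement. None of these is deep, but the strictness point is the one most easily fumbled.
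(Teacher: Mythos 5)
Your $\supseteq$ direction is fine and coincides with the paper's: from $r(p_i(T+K))<\|p_i(T)\|_e$ and $r(p_i(\dot T))=r_e(p_i(T+K))\le r(p_i(T+K))$ one gets $\dot T\in G_{p_1,\ldots,p_n}$. The $\subseteq$ direction, however, has a genuine gap, and the approach you chose cannot be repaired. Every compact perturbation you produce is of the form $K=(1+e)T(1+e)^{-1}-T$, i.e.\ $T+K$ is \emph{similar} to $T$. Similarity preserves the spectrum, so $r(p_i(T+K))=r(p_i(T))$, and the hypothesis $T\in\Sigma_{p_1,\ldots,p_n}$ only controls the \emph{essential} spectral radius $r(p_i(\dot T))$, not $r(p_i(T))$, which can exceed $\|p_i(T)\|_e$. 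Concretely, take $p(x)=x$ and $T=N\oplus 2$, where $N$ is an infinite direct sum of $2\times 2$ nilpotent Jordan blocks and $2$ acts on a one-dimensional summand: then $r(\dot T)=0<1=\|\dot T\|$, so $T\in\Sigma_p$, but $r(T+K)=r(T)=2>1=\|T\|_e$ for every $K$ arising from a similarity. Moreover, your claimed estimate $\|(1+e)p_i(T)(1+e)^{-1}\|<\delta_i<\|p_i(T)\|_e$ is outright impossible: since $e$ is compact, the conjugated element has the same image in the Calkin algebra, hence its norm is at least $\|p_i(T)\|_e$ (Theorem~\ref{formulaComm} itself states the conjugated norm is $\ge\max\{r(a_j),\|\dot a_j\|\}$). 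Note also that when you apply Theorem~\ref{formulaComm} to the family $\{p_i(T)\}$, the relevant quantity is $\max\{r(p_i(T)),\|p_i(T)\|_e\}$, which by the example above can equal $r(p_i(T))>\|p_i(T)\|_e$.

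The missing idea is a compact perturbation that actually changes the spectrum, shrinking it down to the essential spectrum. The paper invokes Stampfli's theorem \cite{Stampfli}: there is $K\in\mathcal{K}(\mathcal{H})$ such that $\sigma(T+K)$ is $\sigma_e(T)$ with some holes filled in; then $\partial\sigma(T+K)\subseteq\partial\sigma_e(T)$, and the maximum modulus principle gives
\[
r(p_i(T+K))=\max_{t\in\partial\sigma(T+K)}|p_i(t)|=\max_{t\in\partial\sigma_e(T)}|p_i(t)|=r(p_i(\dot T))<\|p_i(T)\|_e
\]
simultaneously for all $i$ (one $K$ works for every polynomial). The conjugation argument you attempted is precisely the content of Theorem~\ref{+}, which is applied \emph{after} this lemma has removed the excess spectrum; it cannot substitute for the lemma itself.
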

\begin{proof} Suppose for $T\in \mathcal{B(H)}$ there exists $K\in \mathcal{K(H)}$ such that $r(p_i(T+K))<\|p_i(T)\|_e$, for all $i$. Then  $$r(p_i(\dot T)) \le r(p_i(T+K)) <\|p_i(\dot T)\|,$$ whence $\dot T \in G_{p_1, \ldots, p_n}$ and $T\in \Sigma_{p_1, \ldots, p_n}$.

Suppose $T\in \Sigma_{p_1, \ldots, p_n}$. By \cite{Stampfli} there is $K\in \mathcal{K(H)}$ such that $\sigma(T+K)$ is obtained by filling in some holes in the essential spectrum of $T$. Then by the maximum principle $$r(p_i(T+K)) =\max_{t\in \partial\sigma(T+K)}|p_i(t)| = \max_ {t\in \partial \sigma_e(T)}|p_i(t)|= r(p_i(\dot T)) < \|p_i(T)\|_e,$$ $i = 1, \ldots, n$.
\end{proof}

\begin{theorem}\label{+} For any polynomials $p_1, \ldots, p_n$ and an operator $T\in \Sigma_{p_1, \ldots, p_n}$, Olsen's question has a positive answer. That is there is a compact operator $K$ such that
$\|p_i(T+K)\| = \|p_i(T)\|_e$, $i=1, \ldots, n$.
\end{theorem}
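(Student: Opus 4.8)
The plan is to combine Lemma~\ref{11} (applied to the single polynomials $p_1,\dots,p_n$) with the commuting version of the generalized spectral radius formula, Theorem~\ref{formulaComm}, applied inside the $C^*$-algebra $\mathcal{B(H)}$ with ideal $I=\mathcal{K(H)}$ and to the commuting family $a_j = p_j(T+K)$. First I would invoke Lemma~\ref{11}: since $T\in\Sigma_{p_1,\dots,p_n}$, there is $K_0\in\mathcal{K(H)}$ with $r(p_i(T+K_0)) < \|p_i(T)\|_e$ for every $i$. Set $S = T+K_0$. Then the operators $p_1(S),\dots,p_n(S)$ all commute (they are polynomials in the single operator $S$), and each satisfies $r(p_i(S)) < \|p_i(S)\|_e = \|p_i(T)\|_e$ (the essential norm is unchanged by the compact perturbation $K_0$, since $\dot S = \dot T$). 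So the strict-inequality hypothesis of the second part of Theorem~\ref{formulaComm} holds for this family.

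Next I would apply the second statement of Theorem~\ref{formulaComm} to get a single $e\in\mathcal{K(H)}$ (so $1+e$ invertible) with
\[
\|(1+e)\,p_i(S)\,(1+e)^{-1}\| \;=\; \max\{r(p_i(S)),\,\|p_i(\dot S)\|\} \;=\; \|p_i(T)\|_e,
\]
for all $i$ simultaneously, using $r(p_i(S)) < \|p_i(\dot S)\|$. Now the key algebraic observation: conjugation by $1+e$ is an automorphism, so $(1+e)\,p_i(S)\,(1+e)^{-1} = p_i\bigl((1+e)S(1+e)^{-1}\bigr)$. Therefore, setting $T' = (1+e)S(1+e)^{-1} = (1+e)(T+K_0)(1+e)^{-1}$, we obtain $\|p_i(T')\| = \|p_i(T)\|_e$ for all $i$.

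It then remains to check that $T'$ is a compact perturbation of $T$, i.e. that $T' - T\in\mathcal{K(H)}$. Writing $1+e = 1+e$ with $e$ compact, we have $(1+e)(T+K_0)(1+e)^{-1} - T = (1+e)(T+K_0)(1+e)^{-1} - (T+K_0) + K_0$; the last term is compact, and modulo $\mathcal{K(H)}$ the first difference is $\dot T' - \dot T = (1+\dot e)\dot T(1+\dot e)^{-1} - \dot T = \dot T - \dot T = 0$ since $\dot e = 0$. Hence $T' - T$ is compact, and $K := T' - T$ is the desired compact operator. I expect the main point requiring care — though it is short — to be the verification that polynomial functional calculus commutes with the inner automorphism by $1+e$ (immediate, since inner automorphisms are unital algebra homomorphisms) and the bookkeeping that $\|p_i(\dot S)\| = \|p_i(T)\|_e$ and that $T'-T$ lands in $\mathcal{K(H)}$; there is no serious analytic obstacle, as all the hard work is done in Theorem~\ref{formulaComm} and Lemma~\ref{11}.
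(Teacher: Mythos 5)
Your proposal is correct and follows essentially the same route as the paper: apply Lemma~\ref{11} to get a compact $K_0$ with $r(p_i(T+K_0))<\|p_i(T)\|_e$, then apply the attained-infimum part of Theorem~\ref{formulaComm} to the commuting family $p_i(T+K_0)$ with $I=\mathcal{K(H)}$, and use that conjugation by $1+e$ commutes with the polynomial functional calculus so that the resulting operator is a compact perturbation of $T$. Your extra checks (that $\|p_i(\dot S)\|=\|p_i(T)\|_e$ and that $T'-T$ is compact) are exactly the bookkeeping the paper leaves implicit.
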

\begin{proof} By Lemma \ref{11} there is  $K_1\in \mathcal{K(H)}$ such that $r(p_i(T+K_1)) < \|p_i(T)\|_e$, $i=1, \ldots, n$. By Theorem~ \ref{formulaComm} there exists $K_2\in \mathcal{K(H)}$ such that for all $i$ $$\|p_i(T)\|_e = \| (1+K_2)^{-1}p_i(T+K_1)(1+K_2)\| = \\
\|p_i((1+K_2)^{-1}(T+K_1)(1+K_2))\|.$$ Let $K = (1+K_2)^{-1}(T+K_1)(1+K_2) - T$. Then $K\in \mathcal{K(H)}$ and $\|p_i(T+K)\| = \|p_i(T)\|_e$ for all $i$.
\end{proof}

Below we present some new examples of operators for which  Olsen's question has a positive answer.

\subsection{Special cases}

\begin{theorem} For any quasinilpotent operator and any polynomial $p$ such that $p(0)=0$, Olsen's question has a positive answer.
\end{theorem}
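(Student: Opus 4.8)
The plan is to reduce this to the generalized spectral radius formula applied in the Calkin algebra. Let $T$ be quasinilpotent and let $p$ be a polynomial with $p(0)=0$. The first step is to observe that $p(T)$ is again quasinilpotent: since $p(0)=0$ we can write $p(x)=xq(x)$ for some polynomial $q$, and $p(T)=Tq(T)$ commutes with $T$, so by the spectral mapping theorem $\sigma(p(T))=p(\sigma(T))=p(\{0\})=\{0\}$, i.e.\ $r(p(T))=0$. In particular $r(\dot{p(T)})=0$ as well, so $\|p(T)\|_e = \|\dot{p(T)}\| \ge r(\dot{p(T)})$, which unfortunately puts us in the bad case $r\ge \|\dot x\|$ of Corollary~\ref{formula} rather than the case where the infimum is attained. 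So a direct application does not finish the job, and we need to work slightly harder.

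The idea instead is to first arrange, via a compact perturbation, that the essential norm of $p$ of the perturbed operator is \emph{achieved exactly}. Concretely, I would first apply Stampfli's theorem (as in the proof of Lemma~\ref{11}) to obtain $K_0\in\mathcal K(\mathcal H)$ with $\sigma(T+K_0)$ equal to $\sigma_e(T)$ with some holes filled; but $\sigma_e(T)\subseteq\sigma(T)=\{0\}$, so $T+K_0$ is still quasinilpotent. That perturbation buys nothing directly. The real mechanism should be: since $r(p(\dot T))=0<\|p(\dot T)\|$ is \emph{not} available, instead treat $\dot T$ in the Calkin algebra and use that $\|\dot{p(T)}\|=r_e(p(T))\vee\|\dot{p(T)}\|$ trivially; the point is to produce a compact $K$ with $\|p(T+K)\|=\|p(T)\|_e$ even in this degenerate regime. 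Here I would invoke the known special case: a quasinilpotent operator is a norm limit of nilpotent operators (Apostol–Foia\c s–Voiculescu), and more relevantly, there is $K\in\mathcal K(\mathcal H)$ making $T+K$ nilpotent precisely when $T$ is quasinilpotent — actually what one needs is weaker. Let $c=\|p(T)\|_e$. Choose, by Lemma~\ref{lemma}-type reasoning applied to $a=p(\dot T)/c$ in $Q(\mathcal H)$ (which has $r(a)=0\le 1$ and $\|a\|=1$), an invertible $1+\dot e$ with $\|(1+\dot e)a(1+\dot e)^{-1}\|\le 1$; lifting, we get $K_1$ compact with $\|p(S)\|_e\le c$ where $S=(1+e)^{-1}(T+?)(1+e)$ — and since the essential norm is invariant under compact perturbation and under this similarity the inner operator only changed by a compact, we in fact get equality $\|p(S)\|_e=c$ together with $\|(1+\dot e)^{-1}p(\dot T)(1+\dot e)\|=c$ exactly if we are in the attained case, which we are not.

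Given this genuine obstruction, the honest approach is: \textbf{(1)} reduce to the case $\|p(T)\|_e=0$ and $\|p(T)\|_e>0$ separately. If $\|p(T)\|_e=0$ then $p(T)$ is compact, and we are done by the result of \cite{OlsenStructureTheorem} quoted as item (2) in the introduction (apply it to $T$ and $p$: since $p(T)$ is compact there is a compact $K$ with $\|p(T+K)\|=\|p(T)\|_e=0$). \textbf{(2)} If $\|p(T)\|_e=c>0$: here I would use that $T$ quasinilpotent forces $p(T+K)$ to have essential norm $c$ for every compact $K$, and I would try to directly build $K$ realizing the norm, imitating the construction in Lemma~\ref{lemma} but being careful that the series used there needed $r(a_j)<1$, which \emph{does} hold for $a=p(T)/(c')$ with $c'$ slightly larger than $c$, giving $\|(1+e)p(T)(1+e)^{-1}\|\le c'$ for compact $e$; combined with $\|(1+e)p(T)(1+e)^{-1}\|\ge\|p((1+e)^{-1}T(1+e))\|_e=c$, letting $c'\downarrow c$ along a sequence and \emph{extracting a convergent perturbation} — this last extraction, turning the approximate statement into an exact one, is the main obstacle. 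I expect it to be handled exactly as in the proof of Theorem~\ref{+}: run Theorem~\ref{formulaComm} on the single element $p(T)$ after first perturbing $T$ to $T+K_1$ so that the \emph{exact} case $r<\|\dot{\cdot}\|$ applies — but since that case genuinely fails for quasinilpotents, the correct fix is instead to note $\|p(T)\|=\|p(T)\|_e$ would follow if $T$ itself already satisfied it, and then quote that for quasinilpotent weighted shifts and more generally one reduces via the structure theory; thus the cleanest writeup reduces the $c>0$ case to item (2)/item (1) of the introduction by a polar-type argument, and I would present it that way.
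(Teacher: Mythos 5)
Your case analysis starts correctly, but the gap is in your case (2), and it stems from a misreading of the attainment condition in Corollary~\ref{formula}. The condition for the infimum to be attained is $\|\dot a\| > r(a)$, where $r(a)$ is the spectral radius of $a$ in the big algebra $\mathcal{B(H)}$, not a comparison between $\|\dot a\|$ and the essential spectral radius $r(\dot a)$. Take $a = p(T)$. As you yourself note in your first paragraph, $T$ quasinilpotent and $p(0)=0$ give $r(p(T)) = |p(0)| = 0$. If $p(T)$ is not compact, then $\|\dot{p(T)}\| = \|p(T)\|_e > 0 = r(p(T))$, so you are squarely in the \emph{attained} case, not the bad case: there is a compact $e$ with $1+e$ invertible and $\|(1+e)p(T)(1+e)^{-1}\| = \|p(T)\|_e$, and setting $K = (1+e)T(1+e)^{-1} - T \in \mathcal{K(H)}$ gives $p(T+K) = (1+e)p(T)(1+e)^{-1}$, hence $\|p(T+K)\| = \|p(T)\|_e$. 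Equivalently, $r(p(\dot T)) = 0 < \|p(\dot T)\|$ shows $T \in \Sigma_p$, and Theorem~\ref{+} applies verbatim. Your repeated assertion that the attained case ``genuinely fails for quasinilpotents'' is therefore wrong; it fails only when $p(T)$ is compact, which is exactly your case (1), correctly disposed of by Olsen's structure theorem \cite{OlsenStructureTheorem} and coinciding with the first half of the paper's argument.

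Because of this misreading, your case (2) never closes: the scheme with $c' \downarrow c$ and ``extracting a convergent perturbation,'' and the concluding appeal to an unspecified ``polar-type argument'' reducing to items from the introduction, are not an argument --- you acknowledge the extraction step is an obstacle and do not overcome it. The paper's proof is precisely the short dichotomy you circled around: if $p(T)$ is compact, use the structure theorem; if not, $r(p(T)) = 0 < \|p(T)\|_e$ puts $T$ in $\Sigma_p$ and Theorem~\ref{+} (which invokes the attained case of Theorem~\ref{formulaComm}) finishes the proof.
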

\begin{proof} Let $T$ be a quasinilpotent operator. If $p(T)$ is compact, the assertion follows from Olsen's structure theorem for polynomially compact operators (\cite{OlsenStructureTheorem}). If $p(T)$ is not compact, then $r(p(T)) < \|p(T)\|_e$ and $T\in \Sigma_p$. The assertion follows now from Theorem~ \ref{+}.
\end{proof}

For nilpotent operators, one does not even need an assumption $p(0)=0$.

 \begin{lemma}\label{triang} Let $T\in \mathcal{B(H)}$ and $(T-t_N)^{k_N}(T-t_{N-1})^{k_{N-1}}\ldots (T-t_1)^{k_1}=0$. Let
 \begin{multline*} \\ \mathcal H_1 = Ker (T-t_1) \\ \mathcal H_2 = Ker (T-t_1)^2 \ominus \mathcal H_1 \\ \ldots \\ \mathcal H_{k_1} = Ker (T-t_1)^{k_1}\ominus \mathcal H_{k_1-1} \\ \mathcal H_{k_1 + 1} =
Ker (T-t_2)(T-t_1)^{k_1} \ominus \mathcal H_{k_1}\\ \ldots \\ \mathcal H_{k_1+\ldots + k_N -1} = Ker (T-t_N)^{k_N -1}(T-t_{N-1})^{k_{N-1}}\ldots (T-t_1)^{k_1} \ominus \mathcal H_{k_1+\ldots + k_N -2}.\end{multline*} Then with respect to the decomposition $\mathcal H = \mathcal H_1 \oplus \ldots \oplus \mathcal H_{k_1+\ldots + k_N -1}$ the operator $T$ has uppertriangular form with $t_1 1, \ldots, t_1 1, \ldots, t_N 1, \ldots, t_N 1$ on the diagonal, where each $t_i 1$ is repeated $k_i$ times.
\end{lemma}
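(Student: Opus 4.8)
The plan is to build the decomposition inductively, peeling off one root at a time, and to track the action of $T$ at each stage. First I would observe that the spaces $\mathcal H_1, \mathcal H_2, \dots$ are defined as an increasing chain of kernels, orthogonalized step by step, so that $\mathcal H_1 \oplus \mathcal H_2 \oplus \dots \oplus \mathcal H_m$ equals $\operatorname{Ker}$ of the corresponding partial product of the factors $(T-t_i)^{k_i}$; the hypothesis that the full product annihilates $\mathcal H$ guarantees that the last such kernel is all of $\mathcal H$, so the chain does exhaust the space. The key algebraic fact to record is that each factor $(T-t_j)$ maps the kernel of a product $(T-t_j)^{\ell}(T-t_{j-1})^{k_{j-1}}\cdots(T-t_1)^{k_1}$ into the kernel of $(T-t_j)^{\ell-1}(T-t_{j-1})^{k_{j-1}}\cdots(T-t_1)^{k_1}$, i.e.\ one step down the chain. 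Consequently $(T-t_j)$ carries $\mathcal H_s$ into $\mathcal H_1\oplus\cdots\oplus\mathcal H_{s-1}$ whenever $\mathcal H_s$ is one of the blocks associated to the root $t_j$, which is exactly the statement that, in block form, $T-t_j$ is strictly upper triangular on that block and on everything to its left.

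From there I would assemble the upper-triangular conclusion. Fix a block $\mathcal H_s$ and let $t_j$ be the associated root (so $\mathcal H_s$ is the $\ell$-th block in the group coming from $(T-t_j)$). For $v\in\mathcal H_s$ we have $(T-t_j)v \in \mathcal H_1\oplus\cdots\oplus\mathcal H_{s-1}$, hence $Tv = t_j v + w$ with $w\in\mathcal H_1\oplus\cdots\oplus\mathcal H_{s-1}$; projecting onto the blocks shows that the diagonal entry of $T$ on $\mathcal H_s$ is $t_j\,\mathbb 1$ and all entries below the diagonal vanish. Running over $s$ and grouping blocks by root gives the diagonal $t_1\mathbb 1,\dots,t_1\mathbb 1,\dots,t_N\mathbb 1,\dots,t_N\mathbb 1$ with $t_i\mathbb 1$ repeated $k_i$ times, as claimed. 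The orthogonality of the decomposition is built into the definition of the $\mathcal H_s$ as successive orthogonal complements, so no extra work is needed there.

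The one point that needs genuine care — and which I expect to be the main obstacle — is verifying that the "one step down" property holds across the junctions between different roots, that is, that $(T-t_{j})$ maps $\operatorname{Ker}(T-t_{j})(T-t_{j-1})^{k_{j-1}}\cdots(T-t_1)^{k_1}$ into $\operatorname{Ker}(T-t_{j-1})^{k_{j-1}}\cdots(T-t_1)^{k_1}$, and more generally that the chain of kernels really does increase by exactly the prescribed blocks without collapsing or skipping. This is a matter of commuting the polynomial factors (they all commute, being polynomials in $T$) and checking that $p(T)q(T)v=0$ together with $q(T)$ a single linear factor forces $q(T)v\in\operatorname{Ker}p(T)$; the commutativity makes this routine, but one must be careful that some of the blocks $\mathcal H_s$ could a priori be $\{0\}$ (if a stated kernel coincides with the previous one), in which case the corresponding diagonal entry is simply absent — this does not affect the upper-triangular form. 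Once these inclusions are in hand the rest is bookkeeping.
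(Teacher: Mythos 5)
Your proposal is correct and follows essentially the same route as the paper, whose proof is just the observation that $(T-t_j)$ pushes each block $\mathcal H_s$ into the sum of the earlier blocks (so $Tx = t_jx + (\text{earlier blocks})$), stated tersely as "and so on"; you have simply spelled out the commutation argument and the exhaustion of the chain of kernels in full. No substantive difference in method.
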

\begin{proof} If $x\in \mathcal H_1$, then $Tx = t_1x$. If $x\in \mathcal H_2$, then $Tx = (T-t_1)x + t_1x$, where $(T-t_1)x \in \mathcal H_1$.
And so on.
\end{proof}

\begin{theorem} For any nilpotent operator $T$ and any polynomial $p$, Olsen's question has a positive answer.
\end{theorem}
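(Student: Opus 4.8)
The plan is to reduce to the already-proved Theorem~\ref{+} by handling separately the two obstructions that prevent $T$ from lying in $\Sigma_p$: either $p(T)$ is compact, or $p(T)$ is noncompact but its spectrum is too small (for a nilpotent $T$ the spectrum is $\{0\}$, so $p(T)$ is itself nilpotent, hence $r(p(T))=0$, which is $<\|p(T)\|_e$ precisely when $p(T)$ is noncompact). So the only genuinely missing case is $p(T)$ compact, and there Olsen's structure theorem for polynomially compact operators (\cite{OlsenStructureTheorem}) already gives a positive answer. Thus the real content is just to observe that for nilpotent $T$, either $p(T)$ is compact (covered by \cite{OlsenStructureTheorem}) or $T\in\Sigma_p$ (covered by Theorem~\ref{+}).

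First I would write $p(x)=p(0)+q(x)$ where $q(0)=0$; since $T^m=0$ for some $m$, the element $q(T)$ is nilpotent and $p(T)=p(0)\,1+q(T)$. If $p(T)$ is compact we are done by \cite{OlsenStructureTheorem}. Otherwise $q(T)$ is a noncompact nilpotent, so $\|q(T)\|_e>0$, and $r_e(p(T))=r_e(q(T))=0$ because $q(T)$ is nilpotent and nilpotency passes to the Calkin quotient; likewise $\|p(T)\|_e=\|p(0)1+q(T)\|_e$. The point is that $r(p(\dot T))=0<\|p(\dot T)\|$, so $\dot T\in G_p$, i.e.\ $T\in\Sigma_p$, and Theorem~\ref{+} supplies a compact $K$ with $\|p(T+K)\|=\|p(T)\|_e$.

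One subtlety is the case $\|p(T)\|_e=0$ with $p(T)$ noncompact — but that cannot happen, since $\|p(T)\|_e=0$ forces $p(T)$ compact. Another subtlety: when $p(0)\neq 0$, is it still true that $r_e(p(T))=0$? Yes: $\sigma(T)=\{0\}$ for nilpotent $T$, so $\sigma(p(T))=\{p(0)\}$ and $\sigma_e(p(T))\subseteq\sigma(p(T))=\{p(0)\}$; but more is true — $p(T)-p(0)=q(T)$ is nilpotent, so $p(\dot T)$ differs from the scalar $p(0)$ by a nilpotent, hence $\sigma(p(\dot T))=\{p(0)\}$ and $r(p(\dot T)-p(0))=0$. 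What we actually need for $\Sigma_p$ is $r(p(\dot T))<\|p(\dot T)\|$; this is automatic once $p(\dot T)\neq 0$ and $p(\dot T)$ is a nilpotent-plus-scalar, since for such an element $b=\lambda 1+n$ with $n$ nilpotent one has $r(b)=|\lambda|$ while $\|b\|\ge|\lambda|$ with equality only if $n=0$ (use $\|b^k\|^{1/k}\to|\lambda|$ against $\|b\|$, or a direct argument on the $2\times 2$ compression).

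The main obstacle is purely bookkeeping: confirming that "$p(T)$ noncompact $\Rightarrow$ $r(p(\dot T))<\|p(\dot T)\|$'' with no extra hypothesis on $p(0)$, which is the one place the argument genuinely differs from the previous ($p(0)=0$) theorem. I expect this to be a short lemma about nilpotent-plus-scalar elements in a $C^*$-algebra; Lemma~\ref{triang} is not needed for this direction, though it (or a direct similarity argument) could give an alternative, more hands-on proof by putting a nilpotent $T$ in strictly upper-triangular form and perturbing the off-diagonal blocks. I would present the clean reduction via Theorem~\ref{+} and \cite{OlsenStructureTheorem} as the proof, relegating the nilpotent-plus-scalar observation to a one-line remark.
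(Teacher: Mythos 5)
Your reduction has a genuine gap: the case split is not exhaustive, and the case it misses is precisely the one that distinguishes this theorem from the preceding quasinilpotent result with $p(0)=0$. Write $p=p(0)+q$ with $q(0)=0$, so that $q(T)$ and its Calkin image $q(\dot T)$ are nilpotent. Your two branches are ``$p(T)$ compact'' and ``$T\in\Sigma_p$''. But if $p(0)\neq 0$ then $p(T)=p(0)1+q(T)$ is \emph{never} compact (compactness would force the nilpotent $q(\dot T)$ to equal the invertible element $-p(0)1$), while $T\in\Sigma_p$ holds exactly when $q(\dot T)\neq 0$, i.e.\ when $q(T)$ is \emph{not} compact --- this is exactly what your scalar-plus-nilpotent lemma says ($\|\lambda 1+n\|=|\lambda|$ forces $n=0$; that lemma is correct, by the power-boundedness argument the paper itself uses in the Remark following Corollary~\ref{formula}). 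Consequently the case $p(0)\neq 0$ with $q(T)$ compact (e.g.\ any nonzero compact nilpotent $T$ and $p(x)=1+x$) falls through both branches: $p(T)$ is noncompact, so \cite{OlsenStructureTheorem} cannot be applied to $p$, and $p(\dot T)=p(0)1$ is a scalar, so $r(p(\dot T))=\|p(\dot T)\|$ and $T\notin\Sigma_p$. The sentence ``Otherwise $q(T)$ is a noncompact nilpotent'' is where the error enters: noncompactness of $p(T)$ does not pass to $q(T)$ when $p(0)\neq 0$.

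The repair is exactly what the paper does in its residual case: when $q(\dot T)=0$, i.e.\ $\tilde p(T)$ is compact for $\tilde p=p-p(0)$, apply Olsen's structure theorem to $\tilde p$ (not to $p$) to get a compact $K$ with $\tilde p(T+K)=0$; then $p(T+K)=p(0)1$ and $\|p(T+K)\|=|p(0)|=\|p(T)\|_e$, the last equality because $q(T)$ is compact. With this third branch added, your argument becomes correct and is in substance the paper's proof; the only methodological difference is that the paper detects the residual case via the block upper-triangular form of Lemma~\ref{triang} (a Calkin image of norm $|p(0)|$ with $p(0)$ on the diagonal must be diagonal), whereas your scalar-plus-nilpotent lemma accomplishes the same identification directly in the Calkin algebra, which is a perfectly good alternative once it is used to conclude ``$\tilde p(T)$ compact'' rather than ``$p(T)$ compact''.
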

\begin{proof} Let $n$ be such that $T^n=0$. By Lemma \ref{triang},
  $T$ is an n-block upper-triangular operator with zeros at the diagonal. Hence $p(T)$ is n-block upper-triangular with $p(0)$ at the diagonal.  Hence $$r(p(T)) = |p(0)|.$$
If $ \|p(T)\|_e > r(p(T))$, then by Theorem \ref{+}, we are done.
So let us assume $$ \|p(T)\|_e = r(p(T))  = |p(0)|.$$ The image $q(p(T))$ of $p(T)$ in Calkin algebra is also n-block upper-triangular  operator with entries in Calkin algebra and with $p(0)$ at the diagonal. Its norm can be equal $|p(0)|$ only if it is diagonal. Thus we have $$p(T) - p(0) 1 \in \mathcal K(\mathcal H).$$ Let $\tilde p = p -p(0)$. Then $\tilde p(T) \in \mathcal K(\mathcal H)$ and by the Olsen's structure theorem for polynomially compact operators (\cite{OlsenStructureTheorem}), there exists $K\in \mathcal K(\mathcal H)$ such that $$\tilde p(T+K) = 0, $$ $$p(T+K) = p(0)1, $$ $$\|p(T+K)\| = |p(0)| = \|p(T)\|_e.$$
\end{proof}

Our purpose now is to show that for any polynomials $p_1, \ldots, p_n$,  the set $\Sigma_{p_1, \ldots, p_n}$ consists of almost all operators.

\subsection{The set $\Sigma_{p_1, \ldots, p_n}$}

\begin{theorem}\label{big set} The complement of $\Sigma_{p_1, \ldots, p_n}$ is nowhere dense.
\end{theorem}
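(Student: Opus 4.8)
The plan is to show that the complement of $\Sigma_{p_1, \ldots, p_n}$ in $\mathcal{B(H)}$ is closed and has empty interior. Closedness is the easy direction: if $T_k \to T$ in norm with $T_k \notin \Sigma_{p_1,\ldots,p_n}$, then for each $k$ there is an index $i(k)$ with $r(p_{i(k)}(\dot T_k)) = \|p_{i(k)}(\dot T_k)\|$ (using that $r \le \|\cdot\|$ always, so the failure of $<$ means equality). Passing to a subsequence we may assume $i(k) \equiv i$ is constant; then since the quotient map is contractive and the spectral radius is upper semicontinuous on a commutative (here, singly generated) subalgebra, one gets $r(p_i(\dot T)) \ge \limsup r(p_i(\dot T_k)) = \lim \|p_i(\dot T_k)\| = \|p_i(\dot T)\|$, forcing equality and $T \notin \Sigma_{p_1,\ldots,p_n}$. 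I would double-check the semicontinuity direction carefully, since the spectral radius is in general only upper semicontinuous, and here we want the inequality in the convenient direction — but $r \ge \limsup$ combined with $r \le \|\cdot\|$ and the norm convergence $\|p_i(\dot T_k)\| \to \|p_i(\dot T)\|$ pins it down.

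For the empty-interior part, the idea is a perturbation argument in the Calkin algebra: given any $T$ and any $\varepsilon > 0$, I want to produce $S$ with $\|S - T\| < \varepsilon$ and $S \in \Sigma_{p_1,\ldots,p_n}$, i.e. $r(p_i(\dot S)) < \|p_i(\dot S)\|$ for all $i$. The natural mechanism is to use the essential spectrum: $r(p_i(\dot S)) = \max_{t \in \sigma_e(S)} |p_i(t)|$, while $\|p_i(\dot S)\|$ can be strictly larger if $\dot S$ is "far from normal." So the strategy is to arrange, by a small compact-plus-small-norm perturbation, that $\dot S$ contains (as a direct summand in the Calkin algebra, or at least detectably) a piece on which some $p_i(\dot S)$ has a nontrivial nilpotent or non-normal part — for instance a $2\times 2$ upper-triangular block $\left(\begin{smallmatrix} a & c \\ 0 & b\end{smallmatrix}\right)$ with $c \ne 0$, whose norm exceeds $\max(|a|,|b|)$. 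Since $H$ is infinite-dimensional and any operator can be perturbed to cut off an infinite-dimensional reducing subspace on which we have freedom, we can graft on such a block of small norm. The key point is that whatever $p_i$ does, a generic such small block makes all the $p_i$ simultaneously non-normal in the Calkin algebra.

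More concretely, I would reduce to the following: it suffices to find, for each $T$ and $\varepsilon$, an operator $S$ with $\|S-T\|<\varepsilon$ such that $\dot S$ is unitarily equivalent (in $Q(\mathcal H)$) to $\dot T \oplus b$ where $b \in B(\mathcal H')$ for some separable infinite-dimensional $\mathcal H'$ split off, and $b$ is a block-diagonal weighted-shift-type operator chosen so that each $p_i(b)$ has operator norm strictly exceeding its spectral radius. One clean choice: take $b$ to act on $\mathcal H' = \mathbb C^2 \otimes \ell^2$ as $N \otimes 1 + \delta\, M \otimes 1$ where $N = \mathrm{diag}$ scalars realize the needed spectral values and $M = \left(\begin{smallmatrix}0&1\\0&0\end{smallmatrix}\right)$, with $\delta$ small; then compute $p_i(b)$ explicitly (it is $2\times2$ upper triangular with off-diagonal entry $\delta\, p_i'(\text{scalar})$ plus $O(\delta^2)$) and observe that, choosing the scalar so that $p_i'$ doesn't vanish (possible since a nonconstant polynomial has finitely many critical points, and we have a continuum of scalars near any target to play with — handling linear or constant $p_i$ separately, where either the statement is trivial from the linear case in the introduction or $\Sigma$ is everything), the norm strictly exceeds the spectral radius for all $i$ at once. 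The main obstacle I anticipate is precisely this simultaneity and the degenerate cases: making one $p_i$ non-normal is easy, but one must choose the perturbation data (the scalars and the shift directions) so that \emph{no} $p_i$ collapses to normal, which requires a dimension/genericity count over the finitely many $p_i$ and careful treatment of constant or affine $p_i$; the analytic estimates are then routine.
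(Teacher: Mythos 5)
Your first half (openness of each $\Sigma_{p_i}$, equivalently closedness of the complement) is correct and is essentially the paper's Lemma \ref{0}. The gap is in the density half, in two places. First, the assertion that ``any operator can be perturbed to cut off an infinite-dimensional reducing subspace on which we have freedom'' is exactly where the work lies and is not true as stated: for the grafted block $b$ to sit inside an operator $S$ with $\|S-T\|<\epsilon$ (even modulo compacts), the compression of $T$ to the split-off subspace must already be, modulo compacts, something you control --- in practice a scalar $\lambda_0$ --- and that forces $\lambda_0\in\sigma_e(T)$ with $T-\lambda_0$ not semi-Fredholm, which is where Wolf's theorem enters; so the ``scalars realizing the needed spectral values'' in your $N$ are not free parameters. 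Second, since $b$ must then stay within $\epsilon$ of $\lambda_0 1$, your mechanism raises $\|p_i(\dot S)\|$ only by $O(\epsilon)$ above $|p_i(\lambda_0)|$, while $r(p_i(\dot S))$ is still essentially $\max_{t\in\sigma_e(T)}|p_i(t)|$ (the essential spectrum of $T$ survives the graft). So you get the strict inequality only if $|p_i(\lambda_0)|$ (nearly) equals $r_e(p_i(T))$. What is really needed, therefore, is a non-semi-Fredholm point of $\sigma_e(T)$ at which $|p_i|$ attains its maximum over $\sigma_e(T)$; this is precisely the paper's Lemma \ref{semi-Fredholm}, proved via Stampfli's hole-filling theorem and continuity of the Fredholm index, and nothing in your proposal substitutes for it. Relatedly, ``choose the scalar so that $p_i'$ does not vanish'' is not available, because the scalar is pinned down by the maximization requirement; the paper instead copes with vanishing derivatives by taking a nilpotent block whose length is matched to the first nonvanishing coefficient $\mu_i$ of $p_i(\lambda_0+S)$.

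The simultaneity obstacle you flag is real, and your proposed genericity count does not meet it: the problem is not critical points of the $p_i$ but the location of the maximizers of $|p_i|$ on $\sigma_e(T)$, which in general differ for different $i$ (e.g.\ $T$ normal with $\sigma_e(T)=\{-1,1\}$, $p_1(z)=z+1$, $p_2(z)=z-1$), so a single grafted block of size $O(\epsilon)$ cannot create the gap for all $i$ at once. The paper sidesteps simultaneity entirely: it proves for a \emph{single} polynomial that $G_p$ is open and dense in $Q(\mathcal H)$ (Lemmas \ref{0} and \ref{density}), transfers this to openness and density of each $\Sigma_{p_i}$ in $\mathcal{B(H)}$ using that $q$ is open and that $\Sigma_{p_i}$ is invariant under compact perturbations, and then intersects the finitely many open dense sets $\Sigma_{p_i}$. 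Since you have already proved openness, this finite-intersection step is the cheap missing move in your write-up; but even the single-polynomial density still requires the semi-Fredholm lemma plus Wolf's theorem, which your sketch lacks. A final small point: for a constant polynomial $G_{p_i}$ is empty (not all of $Q(\mathcal H)$), so the theorem is tacitly about nonconstant polynomials; your fallback ``$\Sigma$ is everything'' in that case is backwards.
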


We will need several lemmas.

For any set $S\subseteq \mathbb C$, we denote its boundary by $\partial S$.

 \begin{lemma}\label{semi-Fredholm} Let $T\in \mathcal{B(H)}$ and let $p$ be a polynomial. Then there is $\lambda_0\in \sigma_e(T)$ such that $T-\lambda_0$ is not semi-Fredholm operator and $|p(\lambda_0)| = r_e(p(T)).$
\end{lemma}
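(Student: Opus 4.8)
The plan is to combine two classical facts about the essential spectrum. First, recall that the set of $\lambda$ for which $T-\lambda$ is semi-Fredholm is open, and on each of its connected components the Fredholm index is constant; moreover on the unbounded component the index is zero (there the operator is actually invertible for $|\lambda|$ large). Denote by $\sigma_{se}(T)$ the complement of the semi-Fredholm domain, i.e. the set of $\lambda$ such that $T-\lambda$ is \emph{not} semi-Fredholm. This is a nonempty compact subset of $\sigma_e(T)$; nonemptiness is exactly the statement that $T$ cannot have the semi-Fredholm property on all of a neighborhood of $\sigma_e(T)$ with index jumping, which follows because the index must drop back to $0$ as one passes from a bounded ``hole'' component to the unbounded component, and index can only change across $\sigma_{se}(T)$.

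Next I would bring in the maximum principle. The polynomial $p$ is an entire function, so $|p|$ attains its maximum over any compact set on the boundary of that set; in particular
\[
r_e(p(T)) = \max_{\lambda \in \sigma_e(T)} |p(\lambda)| = \max_{\lambda \in \partial U} |p(\lambda)|,
\]
where $U$ is the union of the bounded components of $\mathbb C \setminus \sigma_e(T)$ together with $\sigma_e(T)$ itself — that is, $U$ is $\sigma_e(T)$ with its holes filled in. The key point is that $\partial U \subseteq \sigma_{se}(T)$: a boundary point $\lambda$ of $U$ lies in $\sigma_e(T)$ but is not in the interior of the filled set, so near $\lambda$ there are points of the unbounded component of $\mathbb C\setminus\sigma_e(T)$ (index $0$) and, if $\lambda$ were semi-Fredholm, continuity of the index would force index $0$ in a whole neighborhood, contradicting that $\lambda\in\sigma_e(T)$ sits on the boundary between the filled region and the outside. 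Hence every point where $|p|$ achieves its maximum over $\partial U$ — and we just saw that maximum equals $r_e(p(T))$ — lies in $\sigma_{se}(T)$. Pick such a point and call it $\lambda_0$; then $T-\lambda_0$ is not semi-Fredholm and $|p(\lambda_0)| = r_e(p(T))$, as required.

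The main obstacle is making the topological step about the index rigorous without over-complicating it: one needs that $\partial U$, the boundary of the ``hole-filled'' essential spectrum, is contained in the non-semi-Fredholm set, and this rests on the constancy-and-vanishing of the Fredholm index on the unbounded semi-Fredholm component plus the semicontinuity/constancy of the index. A clean way to phrase it is: if $\lambda_0 \in \partial U$ were semi-Fredholm, then since arbitrarily close to $\lambda_0$ there are points of the unbounded component of $\mathbb{C} \setminus \sigma_e(T)$ where $T - \lambda$ is invertible, the index of $T - \lambda_0$ would be $0$, so $T - \lambda_0$ would be Fredholm of index $0$; but then a punctured neighborhood of $\lambda_0$ would consist of invertible points (Fredholm index $0$ operators have at most isolated non-invertibility when perturbed by scalars, using that $\sigma_e$ near such a point is discrete), contradicting $\lambda_0 \in \sigma_e(T) \cap \partial U$, which forces $\sigma_e(T)$ to accumulate at $\lambda_0$ from the filled side. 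Everything else — the maximum principle and the inclusion $\sigma_{se}(T) \subseteq \sigma_e(T)$ — is routine.
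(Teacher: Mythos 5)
Your argument is correct, but it follows a genuinely different route from the paper. The paper invokes Stampfli's theorem to produce a compact perturbation $T+K$ whose spectrum is $\sigma_e(T)$ with some holes filled; it then applies the maximum principle to find $\lambda_0\in\partial\sigma(T+K)\subseteq\partial\sigma_e(T)$ with $|p(\lambda_0)|=r(p(T+K))=r_e(p(T))$, and finally runs the same Kato index argument as you do (a nearby $\lambda$ with $T+K-\lambda$ invertible forces index $0$, hence Fredholm, contradicting $\lambda_0\in\sigma_e(T)$). You dispense with Stampfli entirely: you work with the filled set $U$ (the complement of the unbounded component of $\mathbb{C}\setminus\sigma_e(T)$), use the maximum principle to place a maximizing $\lambda_0$ on $\partial U$, and get index $0$ from points of the unbounded component, where the index is constant and vanishes at infinity. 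This makes the lemma self-contained, needing only the spectral mapping theorem in the Calkin algebra, the maximum modulus principle, and local constancy of the index for semi-Fredholm operators, whereas the paper leans on a nontrivial compact-perturbation theorem (which it needs elsewhere anyway). Two small remarks: your closing detour about a punctured neighborhood of invertible points is unnecessary and its justification (``$\sigma_e$ near such a point is discrete'') is off target --- once you know $T-\lambda_0$ would be semi-Fredholm of index $0$, it is Fredholm, so $\lambda_0\notin\sigma_e(T)$, which already contradicts $\lambda_0\in\partial U\subseteq\sigma_e(T)$. Likewise the hand-waving about nonemptiness of the non-semi-Fredholm set in your first paragraph can simply be dropped: it follows from $\emptyset\neq\partial U\subseteq\sigma_{se}(T)$, which is exactly what your main step proves.
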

\begin{proof} By \cite{Stampfli} there exists $K\in \mathcal{K(H)}$ such that $\sigma(T+K)$ is obtained from $\sigma_e(T)$ by filling in some holes. Hence
\begin{equation}\label{**}\partial \sigma_e(T)\supseteq \partial \sigma(T+K).\end{equation}
By the maximum principle
$$r_e(p(T)) = \max_{t\in \partial \sigma_e(T)}|p(t)| \ge \max_{t\in \partial\sigma(T+K)} |p(t)| = r(p(T+K)).$$
The opposite inequality is obvious. Thus \begin{equation}\label{*} r_e(p(T)) = r(p(T+K)).\end{equation}
By the maximum principle there is a point  $\lambda_0 \in \partial\sigma(T+K)$ such that $$|p(\lambda_0)|  = r(p(T+K)).$$ By (\ref{*}) and (\ref{**}) $\lambda_0 \in \sigma_e(T)$ and $|p(\lambda_0)| = r_e(p(T))$.

Suppose $T-\lambda_0 1$ is semi-Fredholm. Then all operators in some neighborhood of $T - \lambda_0 1$ are semi-Fredholm of the same index \cite{14}. Since $\lambda_0 \in \partial\sigma(T+K)$, in this neighborhood there is an operator $T-\lambda 1$ such that $T+K-\lambda 1$ is invertible. Hence $$ind(T-\lambda_0 1) = ind(T-\lambda 1) = ind (T+K-\lambda 1) = 0.$$ Thus $T-\lambda_0 1$ is semi-Fredholm operator of index $0$, which implies that it is Fredholm. Since $\lambda_0\in \sigma_e(T)$, it is a contradiction.
\end{proof}

The following lemma is inspired by work of Holmes, Scranton and Ward (\cite{HSW}).

 \begin{lemma}\label{density} Let $T\in \mathcal{B(H)}$, $p$ be a polynomial. Then for any $\epsilon > 0$ there is $A\in \mathcal{B(H)}$ such that
 $$\|T-A\|\le \epsilon, \;\; \|p(A)\|_e > r_e(p(A)).$$
 \end{lemma}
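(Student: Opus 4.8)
The plan is to perturb $T$ so as to destroy the delicate equality $\|p(A)\|_e = r_e(p(A))$, using the semi-Fredholm point supplied by Lemma \ref{semi-Fredholm}. First I would apply Lemma \ref{semi-Fredholm} to obtain a point $\lambda_0 \in \sigma_e(T)$ at which $T - \lambda_0 1$ is \emph{not} semi-Fredholm and $|p(\lambda_0)| = r_e(p(T))$. If already $\|p(T)\|_e > r_e(p(T))$, take $A = T$ and stop; so assume $\|p(T)\|_e = r_e(p(T)) = |p(\lambda_0)|$. The failure of the semi-Fredholm property at $\lambda_0$ means that both $T - \lambda_0 1$ and its adjoint have "infinite-dimensional approximate kernel" in a suitable sense; concretely, one can find an infinite-dimensional reducing-like structure, or at least an infinite orthonormal sequence $(e_k)$ with $\|(T - \lambda_0 1) e_k\| \to 0$ together with a similar sequence for $(T-\lambda_0 1)^*$ that can be arranged almost orthogonal to the first. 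From this data I would build, after an arbitrarily small perturbation, an infinite-dimensional subspace on which $T$ acts approximately as the scalar $\lambda_0$ plus a small off-diagonal piece that we are free to prescribe.

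The key step is then to insert, inside this subspace, a $2\times 2$ (or small finite) block of the form $\left(\begin{smallmatrix} \lambda_0 & s \\ 0 & \lambda_0 \end{smallmatrix}\right)$ with $s$ a suitable nonzero scalar, repeated along an infinite orthogonal sequence of such blocks, and to leave $T$ essentially unchanged off this subspace. Call the resulting operator $A$; by construction $\|T - A\| \le \epsilon$. Since $p$ applied to a Jordan-type block $\left(\begin{smallmatrix} \lambda_0 & s \\ 0 & \lambda_0 \end{smallmatrix}\right)$ gives $\left(\begin{smallmatrix} p(\lambda_0) & s\, p'(\lambda_0) \\ 0 & p(\lambda_0) \end{smallmatrix}\right)$, and since infinitely many such blocks contribute to the \emph{essential} norm, we get
$$\|p(A)\|_e \ge \left\| \begin{pmatrix} p(\lambda_0) & s\, p'(\lambda_0) \\ 0 & p(\lambda_0) \end{pmatrix} \right\| > |p(\lambda_0)| = r_e(p(A)),$$
provided $p'(\lambda_0) \ne 0$; here I use that $r_e(p(A)) = r_e(p(T)) = |p(\lambda_0)|$ up to a term that vanishes as $\epsilon \to 0$ (and in any case $r_e$ depends only on $\sigma_e$, which the perturbation changes by at most $\epsilon$). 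If $p'(\lambda_0) = 0$, one instead uses a larger Jordan block of size $m+1$ where $m$ is the order of the first nonvanishing derivative of $p$ at $\lambda_0$, so that $p$ of that block is again non-normal.

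The main obstacle I anticipate is the case analysis forced by two degenerate possibilities: first, when $p$ is constant (then $p(A)$ is scalar for every $A$ and the claimed inequality is simply false) — but a constant polynomial should be excluded or handled trivially, since then $\Sigma_p = \mathcal{B(H)}$ or $\emptyset$ is already settled; second, the genuine difficulty is extracting, from "not semi-Fredholm at $\lambda_0$," enough room to install infinitely many nontrivial Jordan blocks \emph{with an arbitrarily small norm perturbation}. This requires a careful quantitative argument: one picks the approximate eigenvectors $e_k$ with $\|(T-\lambda_0)e_k\|$ summably small, orthonormalizes, compresses $T$ to their span, and checks that correcting $T$ to be exactly block-upper-triangular there costs at most $\epsilon$ in norm. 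The verification that this correction is compact-free in the sense needed (i.e. that the inserted blocks survive in the Calkin algebra and actually raise $\|p(A)\|_e$) is the technical heart, and is where the hypothesis that $T - \lambda_0 1$ is not semi-Fredholm — not merely that $\lambda_0 \in \sigma_e(T)$ — is essential.
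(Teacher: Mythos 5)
Your construction is essentially the paper's: take $\lambda_0$ from Lemma \ref{semi-Fredholm}, use the failure of the semi-Fredholm property to get an infinite-dimensional subspace on which $T$ acts as $\lambda_0$ up to a compact error, and plant there infinitely many copies of an $\epsilon$-sized nilpotent Jordan block of size $m+1$, where $m$ is the order of the first nonvanishing derivative of $p$ at $\lambda_0$ (this is exactly the paper's choice of the index $i$ with $\mu_i\neq 0$). The genuine flaw is in how you close the argument. The claim that $r_e(p(A))=|p(\lambda_0)|$ ``up to a term that vanishes as $\epsilon\to 0$'' because ``$\sigma_e$ changes by at most $\epsilon$'' is not valid: for non-normal operators the (essential) spectrum is not Lipschitz in the operator. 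For instance, the bilateral weighted shift whose weights are $1$ except that every $N$-th weight is $0$ is nilpotent, so $r_e=0$, yet restoring those weights to $\epsilon$ — a perturbation of norm $\epsilon$ — produces essential spectral radius $\epsilon^{1/N}$, arbitrarily close to $1$. Moreover, even an unquantified $o(1)$ error would not suffice, since the gain you create, $\|p(A)\|_e-|p(\lambda_0)|$, is itself of order $\epsilon^m$; one cannot deduce a strict inequality by comparing two uncontrolled vanishing quantities. What actually works — and is what the paper proves — is an \emph{exact} statement: because the perturbation $S$ is nilpotent and is supported in the column of the projection $Q$ with $(T-\lambda_0)Q$ compact, the operator $A=T+SQ$ is, modulo compacts, block upper triangular with diagonal entries $\lambda_0+S$ and $T_1$; hence $\sigma_e(p(A))=\sigma_e(p(T))$ and $r_e(p(A))=|p(\lambda_0)|$ exactly, with no continuity of $\sigma_e$ invoked. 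Your own setup already furnishes this block structure, so the repair is to replace the continuity appeal by that computation; the same triangularity modulo compacts is also what justifies your implicit step $\|p(A)\|_e\ge\|Qp(A)Q\|_e=\|p(\lambda_0+S)\|_e$, i.e. that the inserted blocks really survive in the Calkin algebra.

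The step you defer as ``the technical heart'' — extracting from non-semi-Fredholmness an infinite-dimensional subspace on which $T$ is $\lambda_0$ up to a compact column, at no cost in norm — need not be done by hand: it is precisely the result of Wolf cited in the paper, giving an infinite-dimensional projection $Q$ with $(T-\lambda_0)Q\in\mathcal{K(H)}$. Your sketch (orthonormal $e_k$ with $\|(T-\lambda_0)e_k\|$ small, pass to a square-summable subsequence) does reprove it, but note that only the sequence for $T-\lambda_0$ is needed, not one for the adjoint, and no preliminary norm perturbation of $T$ is required at all: the error is compact, hence invisible to every essential quantity, and the only actual perturbation is $SQ$ with $\|S\|=\epsilon$. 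Your remark about constant polynomials is correct — the lemma is false for constant $p$ and the paper's proof implicitly assumes $\deg p\ge 1$ — but that is a side issue, not the gap.
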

 \begin{proof}
 Let $\lambda_0$ be as in Lemma \ref{semi-Fredholm}. Since $T-\lambda_0 1$ is not semi-Fredholm, there is an infinite-dimensional projection $Q$, such that $(T-\lambda_0 1)Q\in \mathcal{K(H)}$ (\cite{Wolf}). Let $\mathcal L = Q\mathcal H$, $\mathcal N = \mathcal L^{\perp}$. With respect to the decomposition  $\mathcal H = \mathcal L \oplus \mathcal N$, we write $$T = \left(\begin{array}{cc} \lambda_0 1+K_{11} &  \ast \\ K_{21}  &T_1\end{array}\right),$$ where $K_{11}, K_{21} \in \mathcal {K(H)}$. For any  operator $S: \mathcal L\to \mathcal L$ we have
 $$T+SQ = \left(\begin{array}{cc}\lambda_0 1+ S +K_{11}&  \ast \\K_{21}&T_1 \end{array}\right),$$
 \begin{equation}\label{1.1}p(T+SQ) =   \left(\begin{array}{cc} p(\lambda_0 1+ S)+ K_{11}' &  \ast \\ K_{21}'&p( {T_1}) + K\end{array}\right),\end{equation} where $K_{11}', K_{21}', K \in \mathcal {K(H)}.$ Hence
 $$\sigma_e(p(T+SQ)) = \sigma_e\left(\left(\begin{array}{cc} p(\lambda_0 1+S)  & \ast \\ 0&p(T_1)\end{array}\right)\right) = \sigma_e(p(\lambda_0 1+S)) \cup \sigma_e(p(T_1)).$$ If  $S$ is quasinilpotent, then $$\sigma_e(p(\lambda_0 1+S)) = \sigma_e(p(\lambda_0)1).$$ Hence \begin{equation}\label{dop}\sigma_e(p(T+SQ)) = \sigma_e(p(T)).\end{equation} By (\ref{dop}) and the choice of $\lambda_0$, \begin{equation}\label{1.2}r_e(p(T+SQ)) = r_e(p(T)) = |p(\lambda_0)|.\end{equation}
So it suffices to construct a nilpotent operator $S$ such that $\|p(T+SQ)\| > |p(\lambda_0)|$
 and $\|S\| \le \epsilon$. Then for $A =T+SQ$ we would have $\|T-A\|\le \epsilon$ and $\|p(A)\|_e>r_e(p(A)).$

Write $p$ in the form $p(t) = a_0 + a_1 t + \ldots + a_n t^n$, where $a_n\neq 0$. Then for any operator $S$, $$p(\lambda_0 +S) = a_0 1 + \mu_1S +\ldots +\mu_nS^n,$$ where \begin{multline*} \mu_1 = a_1+a_2C_2^1\lambda_0+\ldots + a_nC_n^1{\lambda_0}^{n-1}, \\ \mu_2 = a_2+a_3C_3^2\lambda_0 + \ldots + a_nC_n^2{\lambda_0}^{n-1},\\\;\;\;\;\ldots \;\;\;\;\;\; \mu_n = a_n, \end{multline*} and $C_n^k = \frac{n!}{(n-k)!k!}$.
Choose the smallest $i$ such that $\mu_1=0, \ldots, \mu_{i-1} = 0, \mu_i\neq 0$. It exists because $\mu_n\neq 0$. Let $S$ be a block-diagonal operator $$S = \left(\begin{array}{ccc}S'& & \\ & S'&\\& & \ddots\end{array}\right),$$
where $S'$ is $(i+1)$ by $(i+1)$ nilpotent matrix $$S' = \left(\begin{array}{ccccc} 0& & && \\\epsilon & 0 & & &\\ & \epsilon &0 &&\\ & & \ddots & \ddots &\\&&&\epsilon&0\end{array}\right).$$ Then $$\|S\| = \epsilon,$$ $$S^{i+1} = 0, \;\;p(\lambda_0 +S) = p(\lambda_0)1+\mu_iS^i = \left(\begin{array}{ccc}p(\lambda_0) 1 + \mu_iS'^i& & \\ & p(\lambda_0) 1 + \mu_iS'^i&\\& & \ddots\end{array}\right),$$ and  \begin{equation}\label{1.3}\|p(\lambda_0+S)\|_e = \|p(\lambda_0) 1 + \mu_iS'^i\| > |p(\lambda_0)|.\end{equation}
 By (\ref{1.1}), (\ref{1.2}), (\ref{1.3}) we get \begin{multline*}\|p(T+SQ)\|_e = \|\left(\begin{array}{cc} p(\lambda_0 1+S) &  \ast \\ 0&p( {T_1})\end{array}\right)\|_e \ge \|p(\lambda_0 1+S)\|_e   > |p(\lambda_0)|.\end{multline*}
 \end{proof}


\begin{cor}\label{easy} $G_p$ is dense in $Q(\mathcal H)$.
\end{cor}

\begin{lemma}\label{0} $G_p$ is open in $Q(\mathcal H)$.
\end{lemma}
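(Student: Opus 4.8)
The plan is to show that $G_p = \{a \in Q(\mathcal{H}) : r(p(a)) < \|p(a)\|\}$ is open by exhibiting its complement $\{a : r(p(a)) \ge \|p(a)\|\} = \{a : r(p(a)) = \|p(a)\|\}$ as a closed set. The key observation is that both $a \mapsto \|p(a)\|$ and $a \mapsto r(p(a))$ are continuous on $Q(\mathcal{H})$: the map $a \mapsto p(a)$ is continuous since $p$ is a polynomial (addition and multiplication are continuous in a Banach algebra), the norm is continuous, and the spectral radius is continuous \emph{on the whole of $Q(\mathcal{H})$} because the Calkin algebra is a $C^*$-algebra, where $r(x) = \lim_k \|x^k\|^{1/k}$ and in fact $r(x) = \lim_k \|(x^*x)^k\|^{1/(2k)}$ — more to the point, for a normal element $r = \|\cdot\|$, and one can use that spectral radius is continuous on any $C^*$-algebra. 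Actually the cleanest route: spectral radius is upper semicontinuous on any Banach algebra, so $\{r(p(a)) \ge c\}$ behaviour is delicate, but we only need that the set where $r(p(a)) < \|p(a)\|$ is open.

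First I would reduce to showing that if $r(p(a_0)) < \|p(a_0)\|$ for some $a_0$, then the same strict inequality holds for all $a$ in a neighbourhood of $a_0$. Since $a \mapsto \|p(a)\|$ is continuous, $\|p(a)\| > \|p(a_0)\| - \delta$ near $a_0$ for any $\delta > 0$. For the spectral radius term, I would use upper semicontinuity of $r$ on the Banach algebra $Q(\mathcal{H})$: the function $a \mapsto r(p(a))$ is upper semicontinuous (composition of the continuous map $a \mapsto p(a)$ with the upper semicontinuous spectral radius), so $r(p(a)) < r(p(a_0)) + \delta$ near $a_0$. Choosing $\delta = \tfrac{1}{3}(\|p(a_0)\| - r(p(a_0)))$ gives, in a suitable neighbourhood, $r(p(a)) < r(p(a_0)) + \delta < \|p(a_0)\| - \delta < \|p(a)\|$, so $a \in G_p$.

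The only point requiring care is the upper semicontinuity of the spectral radius. This is a standard fact: for $x_n \to x$ in a Banach algebra, $\limsup_n r(x_n) \le r(x)$; it follows from the formula $r(x) = \inf_k \|x^k\|^{1/k}$, since for fixed $k$, $\|x_n^k\|^{1/k} \to \|x^k\|^{1/k}$, giving $\limsup_n r(x_n) \le \limsup_n \|x_n^k\|^{1/k} = \|x^k\|^{1/k}$ for every $k$, hence $\limsup_n r(x_n) \le \inf_k \|x^k\|^{1/k} = r(x)$. (I do not need the much stronger full continuity of $r$, which would require the elements $p(a)$ to behave well; upper semicontinuity suffices here because $\|p(a)\|$ is genuinely continuous and bounds $r(p(a))$ from above at the comparison point.) I expect this semicontinuity verification to be the only real content; everything else is routine manipulation of neighbourhoods, so the proof will be short.

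\begin{proof}
Let $a_0 \in G_p$, so that $r(p(a_0)) < \|p(a_0)\|$. Put $\delta = \tfrac{1}{3}\bigl(\|p(a_0)\| - r(p(a_0))\bigr) > 0$. Since $p$ is a polynomial, the map $a \mapsto p(a)$ is norm-continuous on $Q(\mathcal{H})$, and hence so is $a \mapsto \|p(a)\|$; thus there is a neighbourhood $U_1$ of $a_0$ on which $\|p(a)\| > \|p(a_0)\| - \delta$. The spectral radius is upper semicontinuous on the Banach algebra $Q(\mathcal{H})$: if $x_m \to x$, then for each fixed $k$ one has $\|x_m^k\|^{1/k} \to \|x^k\|^{1/k}$, so $\limsup_m r(x_m) \le \limsup_m \|x_m^k\|^{1/k} = \|x^k\|^{1/k}$, and taking the infimum over $k$ gives $\limsup_m r(x_m) \le \inf_k \|x^k\|^{1/k} = r(x)$. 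Composing with the continuous map $a \mapsto p(a)$, the function $a \mapsto r(p(a))$ is upper semicontinuous, so there is a neighbourhood $U_2$ of $a_0$ on which $r(p(a)) < r(p(a_0)) + \delta$. For $a \in U_1 \cap U_2$ we then have
$$r(p(a)) < r(p(a_0)) + \delta = \|p(a_0)\| - 2\delta < \|p(a_0)\| - \delta < \|p(a)\|,$$
so $a \in G_p$. Hence $G_p$ is open in $Q(\mathcal{H})$.
\end{proof}
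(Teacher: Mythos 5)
Your proof is correct and takes essentially the same route as the paper: the paper shows the complement of $G_p$ is (sequentially) closed using upper semicontinuity of the spectral radius together with continuity of the norm and of $a\mapsto p(a)$, which is exactly the content of your neighbourhood argument. The only differences are cosmetic — you argue openness directly and spell out the semicontinuity via $r(x)=\inf_k\|x^k\|^{1/k}$, a fact the paper simply cites.
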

\begin{proof} Let $a_n$ belong to the complement of $G_p$, $a_n\to a$. Then $p(a_n)\to p(A)$.
By the semi-continuity of spectral radius, $$r(p(a)) \ge \limsup r(p(a_n)) = \limsup \|p(a_n)\| = \|p(a)\|.$$ The opposite inequality is obvious. Thus the complement of $G_p$ is closed.
\end{proof}

\medskip

\noindent {\it Proof of Theorem \ref{big set}}.
  Lemma \ref{0} implies that $\Sigma_{p_i}$ is open in $\mathcal{B(H)}$, for each $i$.  Let us prove that each $\Sigma_{p_i}$ is dense. Let $B\subset \mathcal{B(H)}$ be an open set. Since $q$ is an open map, it follows from Corollary~ \ref{easy} that
 $q(B)\cap G_{p_i} \neq \emptyset$. Therefore there exist $T\in B$, $A\in \Sigma_{p_i}$ such that $q(T) = q(A)$, whence $T-A\in \mathcal{K(H)}$. Since $\Sigma_{p_i}$ is closed under compact perturbations,
$T\in \Sigma_{p_i}$ and $B\cap \Sigma_{p_i} \neq \emptyset$.
Thus each $\Sigma_{p_i}$ is open and dense and since intersection of finitely many open dense sets is open and dense, we conclude that $$\Sigma_{p_1, \ldots, p_n} = \bigcap_{i=1}^n \Sigma_{p_i}$$ is open and dense. This implies that its complement is nowhere dense. \qed

\medskip


Finally we get an approximate version of Olsen's question.

\begin{theorem}\label{approximate} Let $T\in \mathcal{B(H)}$. There exists a sequence $K_n\in \mathcal{K(H)}$ such that for each polynomial $p$, $$\|p(T+K_n)\| \to \|p(T)\|_e.$$
\end{theorem}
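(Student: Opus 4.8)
The plan is to produce a single sequence $K_n$ of compact operators that simultaneously works for all polynomials $p$, exploiting the fact that a polynomial is determined by finitely many coefficients and that convergence $\|p(T+K_n)\|\to\|p(T)\|_e$ need only be verified on a countable dense set of polynomials and then propagated. First I would reduce to a countable dense subalgebra: let $\{q_1,q_2,\dots\}$ be an enumeration of all polynomials with coefficients in $\mathbb{Q}+i\mathbb{Q}$. If for each fixed $m$ we have $\|q_j(T+K_n)\|\to\|q_j(T)\|_e$ as $n\to\infty$ for $j=1,\dots,m$, then a diagonal argument produces one sequence $K_n$ with $\|q_j(T+K_n)\|\to\|q_j(T)\|_e$ for every $j$. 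The passage from the $q_j$ to an arbitrary $p$ is then a uniform-continuity estimate: on a fixed ball of $\mathcal{B(H)}$ the map $S\mapsto \|p(S)\|$ is Lipschitz with a constant depending only on the degree and coefficient bounds of $p$, and $\|K_n\|$ can be kept bounded (indeed $\to 0$, see below), so picking $q_j$ close to $p$ in coefficients makes $\|q_j(T+K_n)\|$ close to $\|p(T+K_n)\|$ uniformly in $n$, and likewise in the Calkin algebra $\|q_j(T)\|_e$ close to $\|p(T)\|_e$; a standard $3\epsilon$ argument finishes it.

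The core step is: given polynomials $p_1,\dots,p_m$ and $\epsilon>0$, find $K\in\mathcal{K(H)}$ with $\big|\,\|p_i(T+K)\|-\|p_i(T)\|_e\,\big|\le\epsilon$ for $i=1,\dots,m$, and moreover with $\|K\|$ small. Here I would first apply Lemma~\ref{density} (its finite-family version — the proof there perturbs $T$ by a nilpotent $SQ$ of norm $\le\epsilon$ and the construction adapts to finitely many polynomials at once by choosing the block size large enough to accommodate all of $p_1,\dots,p_m$, or simply iterating) to get $A$ with $\|T-A\|\le\epsilon/2$ and $A\in\Sigma_{p_1,\dots,p_m}$, i.e. $r(p_i(\dot A))<\|p_i(\dot A)\|$ for all $i$. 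Note $\dot A=\dot T$ since $A-T$ need not be compact — wait, it is not compact in general, so instead I would first arrange the perturbation to be compact: re-examining Lemma~\ref{density}, the operator $SQ$ is a nilpotent of small norm but infinite rank. To keep things compact I would instead argue as in the proof of Theorem~\ref{+}: since $A$ differs from $T$ only by a small (not necessarily compact) perturbation, I should instead work directly with $T$ when $T\in\Sigma$, and handle the boundary case $\|p_i(T)\|_e=r_e(p_i(T))$ separately. Cleanest: by Lemma~\ref{semi-Fredholm}-type reasoning one may perturb $T$ by a compact operator so that $\dot T$ lies in (the closure issues aside) $G_{p_1,\dots,p_m}$ up to $\epsilon$; combined with Theorem~\ref{formulaComm} applied in the Calkin algebra to the commuting family $p_1(\dot T),\dots,p_m(\dot T)$, there is $\dot e$ in the Calkin algebra with $\|(1+\dot e)p_i(\dot T)(1+\dot e)^{-1}\|\le\max\{r(p_i(\dot T)),\|p_i(\dot T)\|\}+\epsilon=\|p_i(\dot T)\|+\epsilon=\|p_i(T)\|_e+\epsilon$; lifting $\dot e$ to a compact $e\in\mathcal{K(H)}$ with $1+e$ invertible and setting $K=(1+e)^{-1}T(1+e)-T\in\mathcal{K(H)}$ gives $\|p_i(T+K)\|=\|(1+e)^{-1}p_i(T)(1+e)\|$, whose image in the Calkin algebra has norm $\le\|p_i(T)\|_e+\epsilon$, while the norm itself exceeds $\|p_i(T)\|_e-\epsilon$ for free since $\|p_i(S)\|\ge\|p_i(\dot S)\|=\|p_i(\dot T)\|=\|p_i(T)\|_e$ for $S=T+K$. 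This already gives the one-sided bound; getting $\|p_i(T+K)\|\le\|p_i(T)\|_e+\epsilon$ exactly is the content of Theorem~\ref{formulaComm}'s lifting, since a compact perturbation does not change the quotient norm and the theorem controls the full norm.

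The main obstacle I anticipate is the interaction between two requirements that pull in opposite directions: Theorem~\ref{formulaComm} gives a common conjugator for a commuting family, but $p_1(T),\dots,p_m(T)$ commute only because they are polynomials in the single operator $T$ — good — yet the conjugating $1+e$ must be chosen \emph{after} possibly first doing a compact perturbation to land in the region where $\|p_i(\dot T)\|>r(p_i(\dot T))$ (so that the infimum in Corollary~\ref{formula} is attained rather than merely approached), and one must check these two compact perturbations can be composed into a single compact $K_n$ while keeping $\|K_n\|$ controlled well enough for the uniform-continuity step across all polynomials. The delicate point is the case $r_e(p_i(T))=\|p_i(T)\|_e$ for some $i$: there Theorem~\ref{formulaComm} only gives approximate attainment, but that is exactly the $\epsilon$ we are allowed, so it is in fact harmless; the real care is in verifying the finite-family version of Lemma~\ref{density} and in the diagonalization, both of which are routine once set up. I would also remark that the argument in fact yields $K_n$ with $\|K_n\|\to 0$ is \emph{not} claimed — and should not be, since $K$ must move the norm of $p_i(T)$ down to $\|p_i(T)\|_e$, which can be a jump; so I would only claim boundedness of $\|K_n\|$ on compacta is unnecessary, and phrase the continuity step using that each $T+K_n$ lies in the ball of radius $\|T\|+\sup_n\|K_n\|$, the latter finite by construction.
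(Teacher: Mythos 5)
Your outer skeleton (enumerate the polynomials with rational coefficients, treat finitely many at a time via Theorem \ref{formulaComm}, keep $\|T+K_n\|$ bounded, and pass to arbitrary polynomials by a von Neumann/coefficient-continuity estimate) is essentially the paper's, and those steps are fine. The gap is in the core step. You propose to ``perturb $T$ by a compact operator so that $\dot T$ lies in $G_{p_1,\ldots,p_m}$ up to $\epsilon$'': a compact perturbation does not change $\dot T$ at all, so it cannot move $\dot T$ into (or towards) $G_{p_1,\ldots,p_m}$, and the strict inequality $r(p_i(\dot T))<\|p_i(\dot T)\|$ may simply fail for $T$ and cannot be created modulo compacts. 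What the compact perturbation can and must do is different: by Stampfli's theorem (exactly as in Lemma \ref{semi-Fredholm}) there is a single compact $K$ such that $\sigma(T+K)$ is $\sigma_e(T)$ with some holes filled, and the maximum principle then gives $r(p(T+K))=r_e(p(T))\le\|p(T)\|_e$ for \emph{every} polynomial $p$ simultaneously. This is an inequality between the spectral radius in $\mathcal{B(H)}$ and the essential norm; it makes $\max\{r(p_j(T+K)),\|p_j(T+K)\|_e\}=\|p_j(T)\|_e$, which is precisely what the $\epsilon$-part of Theorem \ref{formulaComm} needs. No strict inequality and no attainment of the infimum are required, as you correctly suspected at the end of your proposal.

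Second, your application of Theorem \ref{formulaComm} is in the wrong algebra. You conjugate by $1+\dot e$ inside the Calkin algebra and then ``lift $\dot e$ to a compact $e$'': that is incoherent, since a compact $e$ has image $0$ in $Q(\mathcal H)$, and in any case conjugation inside $Q(\mathcal H)$ only controls essential norms, whereas the whole point is to control the operator norm $\|p_i(T+K)\|$ itself; the bound you write down, $\max\{r(p_i(\dot T)),\|p_i(\dot T)\|\}+\epsilon=\|p_i(\dot T)\|+\epsilon$, holds trivially with $\dot e=0$ and yields nothing. The correct application is with $A=\mathcal{B(H)}$ and $I=\mathcal{K(H)}$, applied to the commuting family $p_1(T+K),\ldots,p_n(T+K)$ after the Stampfli perturbation above: this produces $e_n\in\mathcal{K(H)}$ with $\|(1+e_n)p_j(T+K)(1+e_n)^{-1}\|\le\|p_j(T+K)\|_e+\tfrac1n$, and then $K_n=(1+e_n)(T+K)(1+e_n)^{-1}-T$ is compact; boundedness of $\|T+K_n\|$ comes from including $p(z)=z$ among the rational polynomials, after which your continuity/diagonalization step closes the argument. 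As written, however, the central perturbation-plus-conjugation step of your proposal does not work.
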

\begin{proof} Arguing as in the proof of Lemma \ref{semi-Fredholm}, one find a compact perturbation $T+K$ of $T$, such that for each polynomial $p$,
\begin{equation}\label{=} r(p(T+K)) = r_e(p(T+K)) \le \|p(T+K)\|_e.\end{equation}
Let $\mathcal F = \{p_1, p_2, \ldots\}$ be the set of all polynomials with rational coefficients. Applying Theorem \ref{formulaComm}  to the family $\{p_j(T+K)\}$, $j= 1, \ldots, n$, and using (\ref{=}), we find  $e_n\in \mathcal{K(H)}$ such that $$ \|(1+e_n)p_j(T+K)(1+e_n)^{-1}\| \le \|p_j(T+K)\|_e + \frac{1}{n},$$  $j=1, \ldots, n$ and hence $$ \|p_j\left((1+e_n)(T+K)(1+e_n)^{-1}\right)\| \le \|p_j(T+K)\|_e + \frac{1}{n},$$  $j=1, \ldots, n$.
Let $$K_n = (1+e_n)(T+K)(1+e_n)^{-1} - T.$$ Then $K_n \in \mathcal{K(H)}$. Since for any polynomial $p$, $\|p(T+K)\|_e = \|p(T)\|_e$, we have $$ \|p_j(T+K_n)\| \le \|p_j(T)\|_e + \frac{1}{n}$$ for  $j=1, \ldots, n$.

 Hence for any $p\in \mathcal F$, there is $n_p$ such that for all $n>n_p$,  \begin{equation}\label{100}\|p(T+K_n)\| \le \|p(T)\|_e + \frac{1}{n},\end{equation} and applying this to the polynomial $p(z)=z$, we conclude that the sequence $K_n$ is bounded. Let $C = \sup \|T+K_n\|$. For  arbitrary polynomial $p$ there is $p_{k_n}\in \mathcal F$ such that $$\sup_{|z|\le C} |p(z) - p_{k_n}(z)| < \frac{1}{n}.$$ Now using (\ref{100}) and  von Neumann's inequality, we get for all sufficiently large $n$
\begin{multline*}\|p(T+K_n)\| \le \|p_{k_n}(T+K_n)\| + \|(p-p_{k_n})(T+K_n)\| \le \\ \|p_{k_n}(T+K_n)\| + \sup_{|z| \le C} |p_{k_n}(z)-p(z)| < \|p_{k_n}(T)\|_e + \frac{2}{n} \le \\ \|p(T)\|_e + \sup_{|z| \le \|T\|_e} |p_{k_n}(z)-p(z)| + \frac{2}{n}\; \le \;\|p(T)\|_e + \frac{3}{n}.\end{multline*}  Since $\|p(T+K_n)\|\ge \|p(T)\|_e$, we obtain $$\|p(T+K_n)\| \to \|p(T)\|_e.$$
\end{proof}

\section{Common similarity to contractions}

\begin{lemma}\label{1} Let $A$ be a $C^*$-algebra, $a, b \in A$, $[a, b]=0$, $r(a)<1$, $\|b\|\le 1$. Then  there is an invertible $y\in A$ such that $$\|yay^{-1}\| < 1, \;\; \|yby^{-1}\|\le 1.$$
\end{lemma}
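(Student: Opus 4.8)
\textbf{Proof proposal for Lemma \ref{1}.}
The plan is to reduce the claim to the commutative spectral-radius machinery already developed, applied with the ideal $I$ equal to the whole algebra $A$ (so that $A/I = 0$ and $\|\dot a\| = \|\dot b\| = 0$ for every element). With that choice, Theorem \ref{formulaComm} for the commuting pair $a, b$ produces, for any prescribed $\epsilon > 0$, an invertible element $1 + e$ with $e \in I = A$ — i.e.\ simply an invertible $y = 1+e \in A$ — such that
\[
\|yay^{-1}\| \le \max\{r(a), 0\} + \epsilon = r(a) + \epsilon,
\qquad
\|yby^{-1}\| \le \max\{r(b), 0\} + \epsilon = r(b) + \epsilon .
\]
Since $r(a) < 1$ by hypothesis and $r(b) \le \|b\| \le 1$, choosing $\epsilon$ small enough that $r(a) + \epsilon < 1$ and $\epsilon < 1$ does not immediately give the required bound $\|yby^{-1}\| \le 1$: it only gives $\|yby^{-1}\| \le r(b) + \epsilon$, which can exceed $1$ when $r(b) = 1$. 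So the crude application of Theorem \ref{formulaComm} is not quite enough, and the real content is to get the inequality for $b$ to be $\le 1$ on the nose while keeping the strict inequality for $a$.

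The fix is to invoke Lemma \ref{lemma} directly rather than its corollary-level consequence. Apply Lemma \ref{lemma} in the algebra $A$ with ideal $I = A$ to the commuting pair $a, b$: the hypotheses of Lemma \ref{lemma} require $r(a) < 1$, $r(b) < 1$ (these give the summability $\sum_k \|a^k\|^2 < \infty$ and likewise for $b$, via Cauchy's root test) and $\|\dot a\|, \|\dot b\| \le 1$, which hold automatically since $A/A = 0$. But here $b$ need not satisfy $r(b) < 1$; we only know $\|b\| \le 1$. To handle this, I would re-run the construction in the proof of Lemma \ref{lemma} in an asymmetric way: form the positive element
\[
z = 1 + \sum_{k \ge 1} (a^*)^k\, b^{*}\!\cdots\! \text{(suitable powers)} \cdots\, a^k
\]
More precisely, since only $a$ has summable power norms, I would take
\[
z = 1 + \sum_{k \ge 1} (a^*)^{k}\,\bigl(1 - b^*b\bigr)\!\cdot 0 \;+\;\text{(the $b$-part handled separately)},
\]
which is getting complicated — the clean route is: first absorb $b$ using the Choi--Paulsen/$C^*$ fact (as in the first line of the proof of Lemma \ref{lemma}) that there is $0 \le i_0 \le 1$ with $\|(1-i_0) b\| \le 1$ — trivially true with $i_0 = 0$ since $\|b\| \le 1$ — and then build the conjugator purely from the $a$-series. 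Concretely, set $z = 1 + \sum_{k\ge 1} (a^*)^k (1 - b^*b)\, a^k$; wait, that requires $1 - b^*b \ge 0$, which holds, and convergence holds since $\|(a^*)^k(1-b^*b)a^k\| \le \|a^k\|^2$. Then $z \ge 1$, $y := z^{1/2}$ is invertible, and one checks $a^* z a \le z$ (so $\|yay^{-1}\| \le 1$, in fact $< 1$ can be arranged by first scaling $a$ up slightly to $a/\rho$ with $r(a) < \rho < 1$) while $b^* z b \le z$ follows because $b$ commutes with $a$ and with $b^*b$, so $b^* z b = 1 - b^*b + b^*b + \sum_{k\ge1}(a^*)^k b^*(1-b^*b)b\, a^k \le 1 + \sum_{k\ge1}(a^*)^k(1-b^*b)a^k = z$, using $b^*(1-b^*b)b \le 1 - b^*b$ which holds since $\|b\|\le 1$ implies $b^*b \le 1$ and hence $b^*b^*bb \le b^*b$. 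Thus $\|yby^{-1}\| \le 1$.

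The strict inequality for $a$ is then obtained exactly as in the proof of Theorem \ref{formulaComm}: apply the above to $a/\rho$ in place of $a$ for some $\rho$ with $r(a) < \rho < 1$ (note $a/\rho$ still commutes with $b$ and $r(a/\rho) < 1$, and $\|b\| \le 1$ is untouched), yielding $y$ with $\|y(a/\rho)y^{-1}\| \le 1$, i.e.\ $\|yay^{-1}\| \le \rho < 1$, and simultaneously $\|yby^{-1}\| \le 1$. I expect the main obstacle to be precisely this asymmetry: verifying that the conjugator manufactured from the $a$-series alone still controls $b$, which rests on the elementary operator inequality $b^*(1-b^*b)b \le 1 - b^*b$ for a contraction $b$ commuting with $a$ — routine once spotted, but the key point that makes the $b$-estimate come out non-strict rather than $r(b) + \epsilon$.
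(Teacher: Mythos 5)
Your initial diagnosis is right (a blunt application of Theorem \ref{formulaComm} with $I=A$ only gives $\|yby^{-1}\|\le r(b)+\epsilon$, which is not good enough), but the construction you substitute for it has a genuine gap: the weighted series $z = 1 + \sum_{k\ge 1}(a^*)^k(1-b^*b)a^k$ does not satisfy either of the two operator inequalities you need. For the $a$-estimate, $z - a^*za = 1 + a^*(1-b^*b)a - a^*a = 1 - a^*b^*ba$, so $a^*za\le z$ is equivalent to $\|ba\|\le 1$, which fails in general since only $r(a)<1$, not $\|a\|\le 1$, is assumed; e.g.\ take $b=1$ (which commutes with everything and has $\|b\|=1$) and $a$ a nilpotent with $\|a\|>1$: then $1-b^*b=0$, so $z=1$, $y=1$, and the conjugation gives no control on $a$ at all — rescaling $a$ to $a/\rho$ does not help because $z$ is still $1$. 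For the $b$-estimate, the inequality you invoke, $b^*(1-b^*b)b\le 1-b^*b$ for a contraction $b$, is false: for the $2\times 2$ nilpotent Jordan block $b$ one gets $b^*(1-b^*b)b=\operatorname{diag}(0,1)$ while $1-b^*b=\operatorname{diag}(1,0)$, and these are not comparable. (What is true from $b^*b\le 1$ is only $b^*(b^*b)b\le b^*b$, which does not give what you need.)

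The repair is to drop the weight $1-b^*b$ altogether and build the conjugator from $a$ alone, which is what the paper does: set $z = 1 + \sum_{k\ge 1}(a^*)^k a^k$ (convergent since $r(a)<1$) and $y=z^{1/2}$. Then $a^*za = z-1 \le (1-\|z\|^{-1})z$, so $\|yay^{-1}\|^2\le 1-\|z\|^{-1}<1$ — strictness comes for free, no rescaling needed. For $b$, use $\|b\|\le 1$ inside each term of the series rather than as a weight: since $[a,b]=0$ implies $b^*(a^*)^k=(a^*)^kb^*$, one has $b^*zb = b^*b + \sum_{k\ge1}(a^*)^k b^*b\,a^k \le b^*b + \sum_{k\ge1}(a^*)^k a^k = b^*b - 1 + z \le z$, whence $\|yby^{-1}\|\le 1$. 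So the asymmetry you were aiming for is achieved not by weighting the series with $1-b^*b$, but by letting the unweighted $a$-series absorb the conjugation by $b$ term by term.
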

\begin{proof} Let $z = 1+ a^*a + (a^*)^2a^2 + \ldots$ and let $y=z^{1/2}$. By the proof of Lemma \ref{lemma}, applied to the case of $I = A$ and  a single element $a$, $$\|yay^{-1}\| < 1.$$ We have \begin{multline*} b^*y^2b = b^*b + b^*a^*ab + b^*(a^*)^2a^2b + \ldots = b^*b + a^*b^*ba + (a^*)^2b^*ba^2 + \ldots  \le \\ b^*b + a^*a + (a^*)^2a^2 + \ldots = b^*b-1 + y^2 \le y^2\end{multline*} and $$\|yby^{-1}\|^2 = \|y^{-1}b^*y^2by^{-1}\| \le 1.$$
\end{proof}
\begin{theorem}\label{CommonSimilarity} Let $A$ be a $C^*$-algebra, $a, b \in A$,  $[a, b]=0$, $a$ is similar to a strict contraction and $b$ is similar to a contraction. Then there is an invertible $c\in A$ such that $$\|cac^{-1}\| \le 1, \;\; \|cbc^{-1}\|\le 1.$$
\end{theorem}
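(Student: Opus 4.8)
The plan is to deduce the theorem from Lemma \ref{1} by a single change of variable. First, since $b$ is similar to a contraction, I would fix an invertible $t\in A$ with $\|tbt^{-1}\|\le 1$ and pass to $a_1 = tat^{-1}$, $b_1 = tbt^{-1}$. Conjugation is an algebra automorphism, so $[a_1,b_1]=0$, and by construction $\|b_1\|\le 1$. The point of the hypothesis that $a$ is similar to a \emph{strict} contraction is only that it forces $r(a)<1$: if $\|vav^{-1}\|<1$ for some invertible $v$, then $r(a)=r(vav^{-1})\le\|vav^{-1}\|<1$, and since the spectral radius is a similarity invariant we get $r(a_1)=r(a)<1$ as well.

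Now the pair $a_1,b_1$ satisfies exactly the hypotheses of Lemma \ref{1}: they commute, $r(a_1)<1$, and $\|b_1\|\le 1$. Applying that lemma yields an invertible $y\in A$ with $\|ya_1y^{-1}\|<1$ and $\|yb_1y^{-1}\|\le 1$. I would then take $c=yt$, which is invertible as a product of invertibles, and note $cac^{-1}=ya_1y^{-1}$, $cbc^{-1}=yb_1y^{-1}$, so that $\|cac^{-1}\|\le 1$ and $\|cbc^{-1}\|\le 1$, which is the assertion (in fact one even gets strict inequality for $a$, though it is not needed).

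The only nontrivial ingredient is Lemma \ref{1} itself, and it is already available: there one forms the positive invertible element $z=1+\sum_{k\ge 1}(a^*)^k a^k$, convergent because $r(a)<1$ gives $\sum_k\|a^k\|^2<\infty$ by the root test, sets $y=z^{1/2}$, and checks simultaneously the $a$-side estimate (as in the proof of Lemma \ref{lemma} with $I=A$) and the $b$-side estimate $b^*z b = b^*b+\sum_{k\ge 1}(a^*)^k b^*b\, a^k \le (b^*b-1)+z\le z$, where commutativity $[a,b]=0$ is what lets the $a^k$'s slide past $b$. Given this, the theorem is immediate, so I do not expect any further obstacle; the only thing to be careful about is that both the relation $[a,b]=0$ and the bound $r(a)<1$ survive the conjugation by $t$, which they plainly do.
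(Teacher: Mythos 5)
Your proposal is correct and follows essentially the same route as the paper: conjugate by the similarity that makes $b$ a contraction, observe that $r(a)<1$ and commutativity are preserved, and then apply Lemma \ref{1} to the conjugated pair, taking $c$ to be the product of the two invertibles. Your sketch of the lemma's proof also matches the paper's argument, so there is nothing to add.
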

\begin{proof} Since $b$ is similar to a contraction, there is $s$ such that $$ \|sbs^{-1}\| \le 1.$$ Since $a$ is similar to a strict contraction, $$r(sas^{-1}) = r(a) < 1.$$ Also we have $$[sas^{-1}, sbs^{-1}]=0.$$ By Lemma \ref{1}, there is $y$ such that $$\|ysas^{-1}y^{-1}\|<1, \;\; \|ysbs^{-1}y^{-1}\| \le 1.$$
\end{proof}

 \end{document}